\newtheorem{theorem}{Theorem}[section]
\newtheorem{lemma}[theorem]{Lemma}
\newtheorem{corollary}[theorem]{Corollary}
\newtheorem{proposition}[theorem]{Proposition}
\newtheorem{claim}{Claim}
\theoremstyle{definition}
\newtheorem{definition}[theorem]{Definition}
\newtheorem{example}[theorem]{Example}
\newtheorem{question}{Question}
\theoremstyle{remark}
\newtheorem{remark}[theorem]{Remark}
\numberwithin{equation}{section}
\newcommand{\N}{\mathbb N}
\newcommand{\K}{\mathbb K}
\newcommand{\R}{\mathbb R}
\newcommand{\C}{\mathbb C}
\newcommand{\D}{\mathbb D}
\title{Algebrable sets of hypercyclic vectors for convolution operators}
\author[J.\ B\`{e}s and D.\ Papathanasiou]{J.\ B\`{e}s and D.\ Papathanasiou}
\address{J. B\`{e}s, Department of Mathematics and Statistics,
Bowling Green State University,
Bowling Green, Ohio 43403,
USA}
\email{jbes@bgsu.edu}
\address{D. Papathanasiou, Universit\'e Clermont Auvergne, CNRS, LMBP, F-63000 Clermont Ferrand, France.}
\email{dpapath@bgsu.edu}
\thanks{This work is supported in part by MEC, Project
MTM 2016-7963-P. 
We also thank Fedor Nazarov and an anonymous referee for key observations for Section~\ref{SS:1.2}.
}
\date{March 4, 2019}
\subjclass[2010]{Primary 47A16, 46E10}
\keywords{Hypercyclic algebras; convolution operators; MacLane operator, lineability, algebrability}
\begin{document}
\begin{abstract}
We show that several convolution operators on the space of entire functions, such as the MacLane operator, support a dense hypercyclic algebra that is not finitely generated. Birkhoff's operator also has this property on the space of complex-valued smooth functions on the real line.
\end{abstract}
\maketitle
{\large 

\section{Introduction}
The search for large algebraic structures (e.g., linear spaces, closed subspaces, or infinitely generated algebras) in non-linear settings has drawn increasing interest over the past decade \cite{aron_bernal-gonzalez_pellegrino_seoane-sepulveda2015lineability,bernal-gonz\'alez_pellegrino_seoane}.
One such setting is given by the set
\[
HC(T)=\{ f\in X: \ \ \{ f, Tf, T^2f,\dots \} \mbox{ is dense in $X$} \}
\]
of {\em hypercyclic vectors} for a given operator $T$ on a topological vector space $X$.  
Whenever $HC(T)$ is non-empty we say that $T$ is a {\em hypercyclic operator}, and in this case 
 $HC(T)\cup\{ 0\}$ always contains a dense linear subspace that is $T$-invariant, see \cite{wengenroth2003hypercyclic}. 
The question whether  $HC(T)\cup\{0\}$ contains a closed and infinite dimensional subspace $M$ (which in turn is called a {\em hypercyclic subspace}) has a negative answer for some hypercyclic operators $T$ and a positive one for others 
\cite{bernal-gonzalez_montes-rodriguez1995non-finite-dimensional,LeM01,gonzalez_leon-saavedra_montes-rodriguez2000semi-fredholm}, and many findings in this direction have been made; see
e.g. \cite[Ch.\ 8]{bayart_matheron2009dynamics} and \cite[Ch.\ 10]{grosse-erdmann_peris-manguillot2011linear}, and the more recent work \cite{bonilla_grosse-erdmann2012frequently,Menet1bis,Menet2,Menet1,bes_menet2015existence,bayart_ernst_menet}

The search for algebras of hypercyclic vectors, other perhaps than Read's \cite{Re88} construction of an operator on $\ell_1(\N)$ (endowed with any algebra structure) for which every non-zero vector is hypercyclic, may be traced back to the work of
Aron et al \cite{aron_conejero_peris_seoane-sepulveda2007powers,aron_conejero_peris_seoane-sepulveda2007sums} who showed that no translation operator can support a hypercyclic algebra on the space $H(\C)$ of entire functions, endowed with the compact open topology. They also showed that, in
 sharp contrast with the translations operators,  the collection of entire functions $f$ for which every power $f^n$ $(n=1,2,\dots )$ is hypercyclic for the operator $D$ of complex differentiation 
is residual in $H(\mathbb{C})$. 

A few years later Shkarin \cite[Thm.\ 4.1]{shkarin2010on} showed that $HC(D)$ contains both a hypercyclic subspace and a hypercyclic algebra, and with a different approach Bayart and Matheron
\cite[Thm.\ 8.26]{bayart_matheron2009dynamics} also showed that the set of $f\in H(\mathbb{C})$ that  generate an algebra consisting entirely (but the origin) of hypercyclic vectors for $D$ is residual in $H(\mathbb{C})$. This prompted the following question, which also appeared in \cite[p.~105]{bernal-gonz\'alez_pellegrino_seoane}, 
\cite[p.~185]{aron_bernal-gonzalez_pellegrino_seoane-sepulveda2015lineability}:

\begin{question} \label{Q:I1} ({\bf Aron}~\cite[p.~217]{bayart_matheron2009dynamics})   

\begin{quote}
Is it possible to find a hypercyclic algebra for $D$ that is not finitely generated? 
\end{quote}
\end{question}

By imitating Bayart and Matheron's proof,  Conejero and the authors \cite{bes_conejero_papathanasiou2016convolution} showed that $P(D)$ supports a hypercyclic algebra whenever $P$ is a non-constant polynomial vanishing at zero, and with a new approach they obtained hypercyclic algebras for convolution operators not induced by polynomials, such as $\sin(aD)$ or $\cos(aD)$, where $0\ne a\in\C$, see \cite{BCP2}.  
But each of the hypercyclic algebras obtained on $H(\C)$ so far is generated by a single entire function and hence contained in a closed hyperplane. Moreover, each subalgebra of a singly generated algebra is finitely generated \cite{gale}, so larger algebras are required to answer Question~\ref{Q:I1}.

We recall that as established by Godefroy and Shapiro \cite{godefroy_shapiro1991operators}, convolution operators on $H(\C)$ are precisely those operators of the form
\[
f\overset{\Phi(D)}{\mapsto} \sum_{n=0}^\infty a_n D^nf   \ \ \ (f\in H(\C))
\]  
where $\Phi(z)=\sum_{n=0}^\infty a_n z^n \in H(\C)$ is of exponential type (i.e., $|a_n|\le M\ \frac{R^n}{n!} (n=0, 1,\dots )$, for some $M, R>0$), and that each such operator $\Phi(D)$ is hypercyclic whenever $\Phi$ is non-constant.

In this paper we answer Question~\ref{Q:I1} in the affirmative and show that a large class of convolution operators on $H(\C)$ supports dense hypercyclic algebras that are not finitely generated. 
We recall that a subset $M$ of a Fr\'echet algebra $X$ is said to be {\em algebrable} provided $M\cup\{ 0\}$ contains a subalgebra of $X$ that is not finitely generated and {\em dense algebrable} if such a subalgebra is dense in $X$ \cite{aron_bernal-gonzalez_pellegrino_seoane-sepulveda2015lineability}. So Question~\ref{Q:I1} asks whether the set of hypercyclic vectors for $D$ is algebrable. We show that this set is indeed {\em strongly algebrable} in the sense of Bartoszewicz and G{\l}{\c a}b \cite{barto},
see Definition~\ref{D:strongly}.

\begin{theorem} \label{T:1}
Let $\Phi\in H(\C )$ be 
 of finite exponential  type with $|\Phi (0)|<1$ and so that the level set $\{ z: \ |\Phi(z)|=1 \}$ contains a non-trivial, strictly convex compact arc $\Gamma$ satisfying
\begin{equation} \label{eq:T:1} 
\mbox{conv}(\Gamma\cup \{ 0 \} ) \setminus \Gamma \subset \Phi^{-1} (\mathbb{D}).
\end{equation}
Then the set of of hypercyclic vectors for the convolution operator $\Phi(D)$ is dense algebrable.
Moreover, 
\begin{enumerate}
\item[(a)]\  For each $N\in\N$ the set of $f=(f_j)_{j=1}^N$ in $H(\C)^N$ that freely generate a hypercyclic algebra for $\Phi(D)$ is residual in $H(\C)^N$, and
\item[(b)]\ The set of $f=(f_j)_{j=1}^\infty\in H(\C)^\N$ that freely generate a dense hypercyclic algebra for $\Phi(D)$ is residual in $H(\C)^\N$. In particular, the set of hypercyclic vectors for $\Phi(D)$ is densely strongly-algebrable.
\end{enumerate}

\end{theorem}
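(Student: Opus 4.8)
The plan is to run a Baire category argument in the style of \cite[Thm.~8.26]{bayart_matheron2009dynamics}, upgraded from one generator to several and then to countably many.

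\textit{Step 1: reduction to density statements.} Fix $N\in\N$ and say $f=(f_j)_{j=1}^N\in H(\C)^N$ is \emph{good} if $f_1,\dots,f_N$ are algebraically independent and $P(f)\in HC(\Phi(D))$ for every nonzero $P\in\C[X_1,\dots,X_N]$ with $P(0)=0$. Since $HC(\Phi(D))$ is invariant under multiplication by nonzero scalars, the condition on $P$ may be restricted to the compact sets $S_n$ of such polynomials of degree $\le n$ whose coefficient vector has Euclidean norm $1$. For a basic open $V\subset H(\C)$ put
\[
U(n,V)=\bigl\{\, f\in H(\C)^N:\ \text{for every }P\in S_n\text{ there is }m\ge0\text{ with }\Phi(D)^m P(f)\in V \,\bigr\}.
\]
Because $(P,f)\mapsto\Phi(D)^m P(f)$ is continuous and $S_n$ is compact, the tube lemma shows each $U(n,V)$ is open; and for a fixed nonzero $P$ with $P(0)=0$ the set $\{f:P(f)\equiv0\}$ is closed with empty interior (perturb one entry by $t e^{\lambda z}$ and read off the top power of $t$). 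Letting $V$ run through a countable base of $H(\C)$, the good tuples form a countable intersection of open sets, so it suffices to prove each $U(n,V)$ dense. Unwinding the definitions: given polynomials $f^{(0)}_1,\dots,f^{(0)}_N$, a compact $K\subset\C$, $g\in H(\C)$, $\varepsilon>0$ and $n$, I must find $f$ with $\sup_K|f_j-f^{(0)}_j|<\varepsilon$ for each $j$ such that for every $P\in S_n$ some iterate satisfies $\sup_K|\Phi(D)^m P(f)-g|<\varepsilon$.

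\textit{Step 2: the key perturbation lemma.} This is the technical core, in the spirit of \cite{bes_conejero_papathanasiou2016convolution,BCP2}. I look for $f=(f^{(0)}_j+u_j)_j$ where each $u_j$ is a finite sum of exponentials $e^{\lambda z}$ with frequencies $\lambda$ clustered near the arc $\Gamma$. Three facts drive the construction: the eigenrelation $\Phi(D)e^{\lambda z}=\Phi(\lambda)e^{\lambda z}$; the density in $H(\C)$ of $\operatorname{span}\{e^{\lambda z}:\lambda\in E\}$ whenever $E$ has an accumulation point (the Godefroy--Shapiro mechanism \cite{godefroy_shapiro1991operators} behind hypercyclicity of $\Phi(D)$), together with $|\Phi(0)|<1$, which forces $\Phi(D)^m$ to kill polynomials on compacta (it is upper triangular with constant diagonal $\Phi(0)$ on each space of polynomials of bounded degree); and the geometry in \eqref{eq:T:1}, namely $|\Phi|\equiv1$ on $\Gamma$ while $|\Phi|<1$ on $\operatorname{conv}(\Gamma\cup\{0\})\setminus\Gamma$, so that just beyond $\Gamma$ on the side opposite $0$ one has $|\Phi|>1$ and $\Phi(D)^m$ dilates the corresponding exponentials. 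Writing $P(f)=P(f^{(0)})+\bigl(P(f)-P(f^{(0)})\bigr)$ and expanding the bracket multinomially, one arranges that exactly one exponential term carries a frequency in $\{|\Phi|>1\}$ (after $m$ iterates it approximates $g$ on $K$, using density of exponential spans), while $P(f^{(0)})$ and every remaining term are damped by $\Phi(D)^m$ and, before iteration, stay within $\varepsilon$ of the fixed polynomial $P(f^{(0)})$ on $K$; strict convexity of $\Gamma$ is what lets the designated frequency be isolated in $\{|\Phi|>1\}$ with all its multiples $k\lambda$ ($k$ below the relevant degree) pushed inside $\operatorname{conv}(\Gamma\cup\{0\})$. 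Finally, because $S_n$ is compact and only finitely many monomials occur in degree $\le n$, a single perturbation built from finitely many well-separated frequency blocks near $\Gamma$ handles all $P\in S_n$ at once --- with $m$ allowed to depend on $P$ --- which is exactly what turns ``$U(n,V)\ne\emptyset$'' into ``$U(n,V)$ dense''.

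\textit{Step 3: countably many generators and conclusion.} Step~2 makes the good $N$-tuples residual in $H(\C)^N$, which is (a). For (b), each coordinate projection $\pi_N:H(\C)^{\N}\to H(\C)^N$ is continuous and open and $H(\C)^{\N}$ is a Baire space, so the $\pi_N$-preimage of the good $N$-tuples is residual; intersecting over $N\in\N$ and intersecting once more with the residual set of $f$ for which $\langle f_1,f_2,\dots\rangle$ is dense in $H(\C)$ (residual: having constrained finitely many entries, steer two further entries to $z$ and $e^z$, which generate a dense subalgebra) yields a residual set on which the $f_j$ are algebraically independent, the generated algebra is dense, and each of its nonzero elements, living in some finitely generated subalgebra, is hypercyclic. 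A free algebra on infinitely many generators is never finitely generated, so any $f$ in this set witnesses that $HC(\Phi(D))$ is dense algebrable and, the algebra being free, that it is densely strongly-algebrable in the sense of \cite{barto}. The one genuine obstacle is Step~2 --- isolating a single expanding exponential term and taming the multinomial cross terms uniformly over all $P$ of bounded degree --- and this is precisely where strict convexity of $\Gamma$ and the containment \eqref{eq:T:1} are indispensable.
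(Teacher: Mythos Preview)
Your overall architecture (Baire category in $H(\C)^N$, exponential perturbations with frequencies near $\Gamma$, then pass to $H(\C)^\N$) matches the paper's, but Step~2 as written has a genuine gap.

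The issue is the way you organize the Baire argument. Your sets $U(n,V)$ demand that a \emph{single} $f$ work for \emph{every} $P\in S_n$ (a continuum of polynomials), with $m$ allowed to depend on $P$. Your density sketch says that one expanding exponential term, after $m$ iterations, ``approximates $g$ on $K$''. But in $P(f)=\sum_\alpha c_\alpha f^\alpha$ the coefficient sitting in front of that expanding term is $c_\beta$, which varies over $S_n$ and may even vanish; so for different $P$ the same iterate lands near $c_\beta\,g$, not near $g$. Allowing $m$ to vary does not help without saying what the orbit looks like at \emph{other} times, and ``finitely many well-separated frequency blocks'' does not explain how one perturbation encodes the right targets for all possible $c_\beta$. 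Compactness of $S_n$ gives openness of $U(n,V)$ but does nothing for density here.

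The paper avoids this by reorganizing the countable family of open dense sets. Instead of indexing by $(n,V)$, Lemma~\ref{L:2} indexes by quadruples $(k,\ell,A,\beta)$ where $A$ is a finite set of multi-indices, $\beta\in A$ is a distinguished element, $V_k$ ranges over a base of open sets, and $W_\ell$ over a balanced local base at $0$. The open dense set $\mathcal{A}(k,\ell,A,\beta)$ asks only that for some $q$, $\Phi(D)^q f^\beta\in V_k$ while $\Phi(D)^q f^\alpha\in W_\ell$ for all $\alpha\in A\setminus\{\beta\}$. Once $f$ lies in the full intersection, a \emph{given} $P=\sum_{\alpha\in A}c_\alpha z^\alpha$ is handled by choosing $k,\ell$ a posteriori so that $c_\beta V_k+\sum_{\alpha\ne\beta}c_\alpha W_\ell\subset V$. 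This decouples $f$ from the coefficients of $P$ and is exactly what your formulation lacks. The hard work is then proving density of $\mathcal{A}(k,\ell,A,\beta)$ for the \emph{correct} $\beta$: the paper singles out $\beta$ via a linear functional that is injective on $A$ (Lemma~\ref{L:1}), and replaces ``just beyond $\Gamma$'' by a precise arc $\Gamma_r\subset\Phi^{-1}(r\partial\D)$ with $r>1$, together with an auxiliary segment $\Lambda\subset\Phi^{-1}(\D)$ controlling all the cross terms (Lemma~\ref{L:7-}). Your sketch gestures at this geometry but does not isolate either ingredient.
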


Here for any  subset $A$ of the complex plane the symbol $\mbox{conv}(A)$ denotes its convex hull. Also, an arc
 $\mathcal{C}$ is said to be {\em strictly convex} provided each segment with both endpoints in $\mathcal{C}$ only intersects $\mathcal{C}$ at such endpoints.
\begin{remark}
While the geometric assumption $\eqref{eq:T:1}$  for $\Phi$ in Theorem~\ref{T:1} may be relaxed -see Remark~\ref{R:condition}-, it is stronger than the one required 
in \cite[Theorem~3]{BCP2} which established the existence of singly generated algebras and imposed no requirement for $\Phi (0)$ other than belonging to the closed unit disc.
\end{remark}

\begin{corollary} \label{C:D}
The set of hypercyclic vectors for the operator of complex differentiation is densely strongly algebrable on $H(\C)$.  
\end{corollary}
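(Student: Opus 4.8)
The plan is to deduce the corollary from Theorem~\ref{T:1} by exhibiting a function $\Phi$ of finite exponential type with $P(D)=D$ — well, not literally $D$, but a convolution operator with the same hypercyclic vectors as $D$ — satisfying the geometric hypothesis \eqref{eq:T:1}. The natural candidate is $\Phi(z)=\lambda z$ for a suitable small scalar $\lambda\ne 0$, since then $\Phi(D)=\lambda D$ and $HC(\lambda D)=HC(D)$ (hypercyclicity and the hypercyclic vectors themselves are unchanged under multiplying the operator by a nonzero scalar, because the orbit of $f$ under $\lambda D$ is $\{\lambda^n D^n f\}_n$, whose closure is the same as that of $\{D^n f\}_n$ — indeed one checks directly that a set is dense iff its image under the homeomorphism induced by rescaling is dense, or more simply that $\overline{\{\lambda^n D^n f\}} = \overline{\{D^n f\}}$ is false in general but density is preserved). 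Let me instead argue as in the standard reduction: $\lambda D$ and $D$ are quasi-conjugate via the identity, since scaling is irrelevant to density of orbits once one notes $D$ commutes with itself; cleanest is that $f\in HC(D)$ iff $f\in HC(\lambda D)$ because both statements say $\{D^nf : n\ge 0\}$ is dense (the scalars $\lambda^n$ do not affect the closure of the orbit, as multiplication by $\lambda^n\to$ anything stays within the same dense set — more carefully, if $\{D^nf\}$ is dense then given $g$ and a neighborhood we approximate, and the $\lambda^n$ factor is absorbed since we may also approximate $\lambda^{-n}g$).

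Concretely, I would take $\Phi(z)=\frac12 e^{z}$ or, even simpler, $\Phi(z)=\lambda z$ with $0<\lambda<1$. For $\Phi(z)=\lambda z$: $\Phi$ is a polynomial, hence of finite exponential type; $\Phi(0)=0$ so $|\Phi(0)|<1$; the level set $\{z:|\lambda z|=1\}$ is the circle $|z|=1/\lambda$, which certainly contains non-trivial strictly convex compact arcs $\Gamma$; and for such an arc, $\mathrm{conv}(\Gamma\cup\{0\})\setminus\Gamma$ consists of points of modulus strictly less than $1/\lambda$ (since $0$ and the arc all lie in the closed disc of radius $1/\lambda$, and the only boundary points of that disc in the convex hull are on $\Gamma$ itself), whence $|\Phi(z)|<1$ there, i.e. \eqref{eq:T:1} holds. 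Then Theorem~\ref{T:1} gives that $HC(\lambda D)$ is dense algebrable and in fact densely strongly algebrable; since $HC(\lambda D)=HC(D)$, we are done.

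The main (and only real) obstacle is the verification that $\mathrm{conv}(\Gamma\cup\{0\})\setminus\Gamma\subset\Phi^{-1}(\D)$ for the chosen $\Phi$ and some choice of $\Gamma$ — and in the case $\Phi(z)=\lambda z$ this is an elementary planar geometry fact about the circle, so there is essentially no obstacle at all. If one prefers to keep the spirit of ``differentiation'' one can instead verify the hypothesis for a function like $\Phi(z)=\frac{1}{2}(e^z-1)$ or a small multiple of $z$ composed appropriately; but the scalar multiple $\lambda z$ is the most economical route, and the invariance $HC(\lambda D)=HC(D)$ is immediate. I would also remark that the ``dense'' and ``residual'' conclusions (a), (b) of Theorem~\ref{T:1} transfer verbatim, so in particular the set of hypercyclic vectors for $D$ is densely strongly algebrable, which is the asserted statement.
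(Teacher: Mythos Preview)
Your detour through $\Phi(z)=\lambda z$ with $0<\lambda<1$ is unnecessary and introduces a real gap. The function $\Phi(z)=z$ already satisfies every hypothesis of Theorem~\ref{T:1}: one has $\Phi(0)=0$, so $|\Phi(0)|<1$; the level set $\{z:|z|=1\}$ is the unit circle; and for any non-trivial compact arc $\Gamma\subset\partial\D$ the set $\mathrm{conv}(\Gamma\cup\{0\})\setminus\Gamma$ lies in the open disc $\D=\Phi^{-1}(\D)$ by exactly the convexity argument you wrote down for the circle of radius $1/\lambda$. Since $\Phi(D)=D$, Theorem~\ref{T:1}(b) gives the corollary immediately; this is how the paper intends it.

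The gap in your route is the claimed equality $HC(\lambda D)=HC(D)$. The orbit of $f$ under $\lambda D$ is $\{\lambda^n D^n f\}_{n\ge 0}$, and density of $\{D^n f\}$ does \emph{not} automatically yield density of $\{\lambda^n D^n f\}$ when $|\lambda|\ne 1$; your ``approximate $\lambda^{-n}g$'' step is circular, since the target depends on the very $n$ you are trying to select, and you yourself note that the closures need not coincide. What \emph{is} true is that the dilation $S_\lambda:f(z)\mapsto f(\lambda z)$ is a multiplicative homeomorphism of $H(\C)$ satisfying $S_\lambda^{-1}DS_\lambda=\lambda D$, whence $HC(\lambda D)=S_\lambda^{-1}\bigl(HC(D)\bigr)$; because $S_\lambda$ is an algebra automorphism, dense strong algebrability would transfer from $HC(\lambda D)$ to $HC(D)$. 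That repairs your argument, but applying Theorem~\ref{T:1} directly to $\Phi(z)=z$ avoids the issue entirely.
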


\begin{corollary}
The set of common hypercyclic vectors for $\{ \sin(kD) \}_{k\in\N}$ is densely strongly algebrable on $H(\C )$.
In general, any countable family of hypercyclic convolution operators $\{ \Phi_n(D)\}_{n\in\N}$ will have a densely strongly algebrable set of common hypercyclic vectors  if for each $n\in\N$  the  entire function $\Phi_n$ maps the origin into the open unit disc $\D$ and supports a non-trivial strictly convex arc $\Gamma_n\subset \Phi_n^{-1}(\partial\mathbb{D})$  so that
\[
\mbox{conv}(\Gamma_n\cup \{ 0 \} ) \setminus \Gamma_n \subset \Phi_n^{-1} (\mathbb{D}).
\]
\end{corollary}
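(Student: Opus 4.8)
The plan is to derive the corollary from part~(b) of Theorem~\ref{T:1} by a Baire category argument in the product space $H(\C)^{\N}$, which is a Fr\'echet space (being a countable product of Fr\'echet spaces) and hence a Baire space. First I would fix $n\in\N$ and record that $\Phi_n$ satisfies the hypotheses of Theorem~\ref{T:1}: as the symbol of a convolution operator it is of finite exponential type, $|\Phi_n(0)|<1$ by assumption, and, again by assumption, the level set $\{z:\ |\Phi_n(z)|=1\}=\Phi_n^{-1}(\partial\mathbb{D})$ contains a non-trivial strictly convex compact arc $\Gamma_n$ with $\mbox{conv}(\Gamma_n\cup\{0\})\setminus\Gamma_n\subset\Phi_n^{-1}(\mathbb{D})$. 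Thus Theorem~\ref{T:1}(b) applies to each $\Phi_n$ and shows that
\[
\mathcal{R}_n:=\{\, f=(f_j)_{j=1}^{\infty}\in H(\C)^{\N}:\ f\ \text{freely generates a dense hypercyclic algebra for}\ \Phi_n(D)\,\}
\]
is residual in $H(\C)^{\N}$.

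Next, since $H(\C)^{\N}$ is a Baire space, $\mathcal{R}:=\bigcap_{n\in\N}\mathcal{R}_n$ is again residual, in particular dense and non-empty. I would pick any $f=(f_j)_{j=1}^{\infty}\in\mathcal{R}$ and let $\mathcal{A}(f)$ be the subalgebra of $H(\C)$ generated by $\{f_j:\ j\in\N\}$; the point is that $\mathcal{A}(f)$ is one and the same subalgebra for every index $n$. From $f\in\mathcal{R}_n$ for all $n$ it follows at once that $\mathcal{A}(f)$ is free on its countably many generators $f_j$ (hence not finitely generated), is dense in $H(\C)$, and satisfies $\mathcal{A}(f)\setminus\{0\}\subset HC(\Phi_n(D))$ for every $n\in\N$. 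Consequently every non-zero element of $\mathcal{A}(f)$ is a common hypercyclic vector for the family $\{\Phi_n(D)\}_{n\in\N}$, which is precisely the assertion that the set of such vectors is densely strongly algebrable.

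It remains to check that the concrete family $\{\sin(kD)\}_{k\in\N}$ is covered, i.e.\ that each $\Phi_k(z)=\sin(kz)$ meets the geometric requirement. Here $\Phi_k(0)=0\in\mathbb{D}$; and since the linear change $w=kz$ satisfies $|\sin(kz)|=|\sin w|$, fixes $0$, and commutes with the formation of convex hulls, it suffices to produce a single non-trivial strictly convex compact arc $\Gamma\subset\{w:\ |\sin w|=1\}$ with $\mbox{conv}(\Gamma\cup\{0\})\setminus\Gamma\subset\{w:\ |\sin w|<1\}$ and then take $\Gamma_k:=k^{-1}\Gamma$. Such a $\Gamma$ is easy to describe: writing $w=x+iy$ one has $|\sin w|^2=\sin^2x+\sinh^2y$, so the connected component of $\{w:\ |\sin w|<1\}$ containing the origin is bounded above by the graph over $[-\pi/2,\pi/2]$ of the function $g\ge 0$ determined by $\sinh g(x)=\cos x$; this $g$ is strictly concave (indeed $g''(x)=-2\cos x\,(1+\cos^2x)^{-3/2}<0$ for $|x|<\pi/2$), so its graph is a strictly convex compact arc, and since $g(0)>0$ a short convexity argument shows that this arc satisfies the convex-hull condition with respect to the origin. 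In summary there is no deep obstacle in this corollary — it is a formal consequence of Theorem~\ref{T:1}(b) together with the Baire property of $H(\C)^{\N}$ — and the only step calling for any computation is the elementary geometric check for the concrete symbols one wishes to treat; in particular, the requirement that one tuple $f$ freely generate the relevant algebra for all indices simultaneously costs nothing, since that algebra is independent of the index.
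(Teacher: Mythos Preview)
Your argument is correct and matches the paper's intended derivation. The paper states this corollary without proof in the introduction, as a direct consequence of Theorem~\ref{T:1}(b); your Baire-category intersection of the residual sets $\mathcal{R}_n\subset H(\C)^\N$ is precisely the expected route, and your observation that the generated algebra $\mathcal{A}(f)$ is independent of $n$ is the key (trivial) point. Your explicit verification for $\Phi_k(z)=\sin(kz)$ via the concavity of $g(x)=\operatorname{arcsinh}(\cos x)$ is also correct (indeed $g''(x)=-2\cos x(1+\cos^2 x)^{-3/2}<0$ on $(-\pi/2,\pi/2)$ and $|\sin(x+iy)|^2=\sin^2 x+\sinh^2 y$ gives the convex-hull inclusion immediately); the paper would simply refer to \cite[Example~10]{BCP2} for this kind of check.
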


We may derive from Theorem~\ref{T:1} examples of strongly algebrable sets of hypercyclic vectors for differentiation operators on the space  $C^\infty(\R, \C)$ of complex-valued of smooth functions on the real line, whose topology is given by the sequence $(p_n)$ of seminorms \[
p_n(f) =\mbox{sup}_{|x|\le n} \mbox{max}_{0\le j\le n} |f^{(j)}(x)|, \ \ \mbox{ $f\in  C^\infty(\R, \C)$.}\]
As Godefroy and Shapiro~\cite{godefroy_shapiro1991operators} observed, the restriction operator
\[
\begin{aligned}
\mathcal{R}:&H(\C)\to C^\infty(\R, \C) \\
f&=f(z)\mapsto f(x)
\end{aligned}
\]
is continuous, injective, of dense range, and multiplicative, and for any complex polynomial $P=P(z)$ 
we have
\[
\mathcal{R} P(\frac{d}{dz})  = P(\frac{d}{dx}) \mathcal{R} 
\]
Hence $\mathcal{R}$ is an algebra isomorphism onto its range and it sends hypercyclic vectors of $P(D)$ to hypercyclic vectors of $P(\frac{d}{dx})$. Thus, an infinitely generated, dense hypercyclic algebra for $P(D)$ will be maped by $\mathcal{R}$ to an infinitely generated, dense hypercyclic algebra for $P(\frac{d}{dx})$. Thus by Theorem~\ref{T:1} we have the following corollary.

\begin{corollary} \label{C:I1}
Let $P$ be a non-constant polynomial whose level set $\{ z:\ |P(z)|=1 \}$ contains a non-trivial, strictly convex compact arc $\Gamma$ so that
\[
\mbox{conv}(\Gamma\cup \{ 0 \} ) \setminus \Gamma \subset P^{-1} (\mathbb{D}).
\]
Then $P(\frac{d}{dx})$ supports a dense hypercyclic algebra on $C^\infty(\R,\C)$ that is not finitely generated.
\end{corollary}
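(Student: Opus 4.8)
The plan is to transport, along the restriction operator $\mathcal{R}$, the dense non-finitely-generated hypercyclic algebra for $P(D)$ furnished by Theorem~\ref{T:1} to one for $P(\frac{d}{dx})$ on $C^\infty(\R,\C)$; in fact everything needed has been set up in the paragraph preceding the statement. First I would check that Theorem~\ref{T:1} applies with $\Phi=P$: a non-constant polynomial is automatically of finite exponential type, and since $0\in\mathrm{conv}(\Gamma\cup\{0\})$ while $0\notin\Gamma$, the inclusion $\mathrm{conv}(\Gamma\cup\{0\})\setminus\Gamma\subset P^{-1}(\mathbb{D})$ gives $|P(0)|<1$. Theorem~\ref{T:1}---its main conclusion, or part~(b) if one wants the algebra freely generated---then provides a subalgebra $A$ of $H(\C)$ that is dense in $H(\C)$, is not finitely generated, and satisfies $A\setminus\{0\}\subset HC(P(D))$.

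Next I would invoke the Godefroy--Shapiro properties of $\mathcal{R}\colon H(\C)\to C^\infty(\R,\C)$ recalled above: it is continuous, injective, multiplicative, has dense range, and satisfies $\mathcal{R}\,P(D)=P(\frac{d}{dx})\,\mathcal{R}$. Being a continuous injective algebra homomorphism, $\mathcal{R}$ restricts to an algebra isomorphism of $H(\C)$ onto its range.

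Then I would set $B:=\mathcal{R}(A)$ and verify it does the job. It is a subalgebra of $C^\infty(\R,\C)$, and since $\mathcal{R}$ carries $A$ isomorphically onto $B$ while finite generation is an algebra-isomorphism invariant, $B$ is not finitely generated. It is dense: by continuity of $\mathcal{R}$ the closed set $\overline{B}$ contains $\mathcal{R}(\overline{A})=\mathcal{R}(H(\C))$, which is dense in $C^\infty(\R,\C)$, whence $\overline{B}=C^\infty(\R,\C)$. Finally, every nonzero $g\in B$ is hypercyclic for $P(\frac{d}{dx})$: writing $g=\mathcal{R}f$ with $f\in A$, injectivity of $\mathcal{R}$ forces $f\neq 0$, so $f\in HC(P(D))$; and the orbit $\{P(\frac{d}{dx})^{n}g:n\geq 0\}$ equals $\mathcal{R}\bigl(\{P(D)^{n}f:n\geq 0\}\bigr)$, the image of a dense subset of $H(\C)$ under the continuous dense-range map $\mathcal{R}$, hence dense in $C^\infty(\R,\C)$. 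Thus $B$ witnesses that $P(\frac{d}{dx})$ supports a dense hypercyclic algebra on $C^\infty(\R,\C)$ that is not finitely generated.

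I do not expect a genuine obstacle here: the corollary is a pure transfer of Theorem~\ref{T:1} across the intertwining map $\mathcal{R}$. The only points requiring (light) care are that the image of a dense set under a continuous map of dense range is again dense---used both for density of $B$ and for density of the orbits---and that injectivity of $\mathcal{R}$ is precisely what preserves ``nonzero'', so that the hypercyclicity of every nonzero element of $A$ descends to every nonzero element of $B$; together with the routine check that a polynomial meeting the stated geometric condition satisfies the hypotheses of Theorem~\ref{T:1}.
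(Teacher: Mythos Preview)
Your proposal is correct and follows exactly the paper's approach: the paper gives no separate proof of this corollary, merely the paragraph preceding it, which does precisely what you do---apply Theorem~\ref{T:1} to $\Phi=P$ and push the resulting dense, non-finitely-generated hypercyclic algebra forward along the restriction map $\mathcal{R}$, using that $\mathcal{R}$ is a continuous injective multiplicative map with dense range intertwining $P(D)$ and $P(\tfrac{d}{dx})$. Your write-up is in fact more careful than the paper's, explicitly verifying $|P(0)|<1$ (via $0\in\mathrm{conv}(\Gamma\cup\{0\})\setminus\Gamma$) and spelling out the density and non-finite-generation of the image algebra.
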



Conejero and the authors \cite{BCP2} showed that non-trivial translations acting on $C^\infty(\R, \C)$ do support hypercyclic algebras, unlike the case when they act on $H(\C)$. We show in Corollary~\ref{C:T_a}
below that they indeed support dense hypercyclic algebras that are not finitely generated. More generally, 
we have the following Zero-One law (cf.\ Definition~\ref{densegenerator}):

\begin{theorem} \label{T:dos}
Let $X$ be a separable, commutative  $F$-algebra over the real or complex scalar field $\mathbb{K}$, and supporting a dense freely generated subalgebra. Then for any operator $T$ on $X$ that is both weakly mixing and multiplicative, the following are equivalent:
\begin{enumerate}
\item[{\rm ($a$)}]\  The operator $T$ supports a hypercyclic algebra.

\item[{\rm ($b$)}]\ For each non-zero polynomial $P$ with coefficients in $\mathbb{K}$ and in one variable,  the map $\widehat{P}:X\to X$, $f\mapsto P(f)$, has dense range.
\item[{\rm ($c$)}]\ For each $N\ge 2$ and each non-zero polynomial $P$ with coefficients in $\mathbb{K}$ and in $N$ variables, the map $\widehat{P}:X^N\to X$, $f\mapsto P(f)$, has dense range.

\item[{\rm ($d$)}]\ The set of hypercyclic vectors for $T$ is densely strongly-algebrable. Moreover, the set of $f\in X^\N$ that freely generate a dense hypercyclic algebra {\rm (}but zero{\rm )} is residual in $X^\N$.

\end{enumerate}
In particular, either {\em every} weakly mixing multiplicative operator on $X$ supports a dense hypercyclic algebra that is not finitely generated or else no such operator supports a hypercyclic algebra.

\end{theorem}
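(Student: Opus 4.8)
The plan is to prove Theorem~\ref{T:dos} by establishing the cycle of implications $(d)\Rightarrow(a)\Rightarrow(b)\Rightarrow(c)\Rightarrow(d)$, with the final implication carrying the bulk of the work via a Baire category argument in $X^{\N}$. The implications $(d)\Rightarrow(a)$ and $(c)\Rightarrow(b)$ are immediate (the latter by specializing $N$-variable polynomials to one variable). For $(a)\Rightarrow(b)$ I would argue contrapositively: if some $\widehat{P}$ fails to have dense range, pick a nonempty open $U\subset X$ disjoint from $\overline{\widehat{P}(X)}$; any hypercyclic algebra $\mathcal A$ for $T$ contains a nonzero $f$ with $T^{n}f$ near a point of the open set $P(V)$ for suitable $V$—but more carefully, one shows that if $f$ generates a hypercyclic algebra then $P(f)\in\mathcal A$ must itself be hypercyclic (or zero), forcing the orbit of some algebra element to meet $U$, a contradiction once one notes $P(f)$ lies in the closure of the range of $\widehat P$. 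I should be a little careful here: the cleanest route is to observe that if $f\in HC(T)$ generates a hypercyclic algebra, then for the polynomial $Q(t)=t$ trivially $\widehat{Q}$ has dense range, and for the genuine obstruction one uses that $g=P(f)$ satisfies $g\in HC(T)$, so $X=\overline{\{T^{n}g\}}$; combined with multiplicativity $T^{n}g=T^{n}P(f)$ and the continuity of $\widehat P$, density of the orbit forces density of $\widehat P(X)$.

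The heart of the argument is $(b)\Rightarrow(c)$ and then $(c)\Rightarrow(d)$. For $(b)\Rightarrow(c)$, I would proceed by induction on $N$, reducing an $N$-variable polynomial $P(t_{1},\dots,t_{N})$ to a one-variable problem by fixing generic values of the last $N-1$ coordinates and invoking the multiplicative structure together with the separability and the density of the freely generated subalgebra; the key point is that a nonzero polynomial in several variables, evaluated on a freely generating tuple, is nonzero in $X$, and one can perturb within the dense freely generated subalgebra so that the relevant single-variable specialization is again a nonzero polynomial, to which $(b)$ applies. Concretely, write $P$ as a polynomial in $t_{N}$ with coefficients polynomials in $t_{1},\dots,t_{N-1}$, choose $f_{1},\dots,f_{N-1}$ from the dense free subalgebra so that the leading such coefficient is invertible or at least so that $\widehat{P(\cdot,f_{1},\dots,f_{N-1})}$ still has dense range by $(b)$, and conclude.

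For the main implication $(c)\Rightarrow(d)$ I would run the standard transversality/Baire argument adapted to $X^{\N}$. Equip $X^{\N}$ with the product topology (a Polish space, since $X$ is a separable $F$-space), fix a countable dense set $\{y_{k}\}$ in $X$ and a countable base, and for a multi-index $\alpha$ with $|\alpha|\ge 1$ and indices $k,s$ define
\[
A(\alpha,k,s)=\Bigl\{\, f=(f_{j})_{j\ge 1}\in X^{\N}\ :\ \exists\, n\ge s \text{ with } \|T^{n}(f^{\alpha})-y_{k}\|_{s}<\tfrac1s \,\Bigr\},
\]
where $f^{\alpha}=\prod_{j}f_{j}^{\alpha_{j}}$ and $\|\cdot\|_{s}$ is the $s$-th $F$-seminorm; then the set of $f$ freely generating a dense hypercyclic algebra contains $\bigcap_{\alpha,k,s}A(\alpha,k,s)$ intersected with the (dense $G_{\delta}$, by a Kuratowski–Mycielski / free-generation genericity argument) set of freely generating tuples. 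Each $A(\alpha,k,s)$ is clearly open, so the crux is density. Given a basic open box constraining finitely many coordinates $f_{1},\dots,f_{m}$ to lie near prescribed elements $g_{1},\dots,g_{m}$ of the dense free subalgebra, one must find a perturbation and an $n$ with $T^{n}(\prod f_{j}^{\alpha_{j}})$ close to $y_{k}$; using the weak mixing of $T$ one transfers this to approximating, for suitable $u,v$, the quantity $\prod f_{j}^{\alpha_{j}}\approx u$ and simultaneously $T^{n}v\approx y_{k}$ with $v$ near $\prod g_{j}^{\alpha_{j}}$, and then one solves the polynomial equation $\prod f_{j}^{\alpha_{j}}=u$ for the finitely many free coordinates appearing in $\alpha$ by invoking $(c)$: the map sending those coordinates to $\prod f_{j}^{\alpha_{j}}$ is (for fixed values of the other coordinates) a nonzero polynomial map $X^{N}\to X$ with dense range, so some choice lands near $u$ while staying in the prescribed box. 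The technical care needed is (i) to ensure the polynomial that arises when some $\alpha_{j}=0$ for $j\le m$ is still nonzero—handled by the genericity of free generation—and (ii) to keep all the earlier finitely many approximation requirements intact, which is the usual diagonalization. The main obstacle is precisely getting $(c)$ to interact cleanly with the weak-mixing transfer: one needs the dense-range statement not just abstractly but with enough control to respect the prescribed neighborhood of $g_{1},\dots,g_{m}$, and this is where having the dense \emph{freely generated} subalgebra is essential, since it guarantees that the relevant polynomial evaluations are genuinely nonzero and hence that $(c)$ is applicable. The final sentence of the theorem is then immediate: if some weakly mixing multiplicative $T$ supports a hypercyclic algebra, then $(a)$ holds, hence $(b)$–$(d)$ hold for $T$; but $(b)$ is a property of the algebra $X$ alone, so it holds, hence $(d)$ holds, for every weakly mixing multiplicative operator on $X$—and $(d)$ delivers a dense, non-finitely-generated (indeed freely countably generated) hypercyclic algebra.
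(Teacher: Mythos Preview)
Your cycle $(d)\Rightarrow(a)\Rightarrow(b)\Rightarrow(c)\Rightarrow(d)$ and the final zero--one observation are fine in outline, but two of the implications have real gaps.

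In $(b)\Rightarrow(c)$, specializing $t_1,\dots,t_{N-1}$ to elements $f_1,\dots,f_{N-1}\in X$ produces a map $t_N\mapsto P(f_1,\dots,f_{N-1},t_N)$ whose coefficients lie in $X$, not in $\K$, so condition~$(b)$ simply does not apply to it; your induction does not get off the ground. The paper's device is to restrict $P$ to a generic line through the origin in $\K^N$: pick $\lambda\in\K^N$ so that the top homogeneous part $P_m$ satisfies $P_m(\lambda)\ne 0$, and set $p(z):=P(\lambda_1 z,\dots,\lambda_N z)$. This $p$ is an honest polynomial in $\K[z]$ of degree $m\ge 1$ with $p(0)=0$; taking any $f$ that generates a hypercyclic algebra, $p(f)$ is hypercyclic, and multiplicativity gives $T^q p(f)=P(\lambda_1 T^qf,\dots,\lambda_N T^qf)\in V$ for suitable $q$.

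In $(c)\Rightarrow(d)$, your sets $A(\alpha,k,s)$ only force each \emph{monomial} $f^\alpha$ to be hypercyclic. That does not make $A(f)$ a hypercyclic algebra: a nonzero $P(f)=\sum_\alpha c_\alpha f^\alpha$ is a linear combination of hypercyclic vectors, which need not be hypercyclic, so the containment you assert fails. The paper's argument is both correct and much shorter. For each $N$, weak mixing makes $HC(T\oplus\cdots\oplus T)$ residual in $X^N$; if $f$ lies there and $0\ne P\in\K[t_1,\dots,t_N]$ with $P(0)=0$, then by~$(c)$ the set $\widehat{P}^{-1}(V)$ is nonempty open for every nonempty open $V$, so some iterate $(T\oplus\cdots\oplus T)^q f$ lands in it, and multiplicativity yields $T^qP(f)=P(T^qf_1,\dots,T^qf_N)\in V$. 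Thus every $f\in HC(T^{\oplus N})$ already generates a hypercyclic algebra (this is Lemma~\ref{L:wM}); lifting to $X^{\N}$ via finite projections and intersecting with the residual set of algebraically independent, densely generating sequences (Proposition~\ref{P:00} and Lemma~\ref{L:15}) gives~$(d)$. No ``solve $\prod f_j^{\alpha_j}=u$ inside a prescribed box'' step is needed.
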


We list  in Corollary~\ref{C:T_a} below some examples showcasing Theorem~\ref{T:dos}. Cases $(ii)$ and $(iii)$ were first noted in \cite{aron_conejero_peris_seoane-sepulveda2007powers, BCP2}.
\begin{corollary} \label{C:T_a}
\

\begin{enumerate}
\item[{\rm (i)}]\
Each multiplicative weakly mixing operator on the F-algebra $C^\infty(\mathbb{R}, \C)$ over $\mathbb{K}=\C$ supports a dense, non-finitely generated algebra of hypercyclic vectors. In particular, each
translation operator $T_a$ $(0\ne a\in \mathbb{R})$ on $C^\infty(\mathbb{R}, \C)$ has this property.

\item[{\rm (ii)}]\ No multiplicative operator on the F-algebra $H(\Omega )$ over $\mathbb{K}=\C$ supports a hypercyclic algebra, where $\Omega \subset \mathbb{C}$ is simply connected.

\item[{\rm (iii)}]\ No multiplicative operator on the F-algebra $C^\infty(\mathbb{R}, \mathbb{R})$ over $\mathbb{K}=\mathbb{R}$ supports a hypercyclic algebra.

\item[{\rm (iv)}]\ No multiplicative operator on the F-algebra $H_\mathbb{R}(\C )=\{ f\in H(\C ):\ f(\mathbb{R})\subset \mathbb{R} \}$ over $\mathbb{K}=\mathbb{R}$ supports a hypercyclic algebra.

\end{enumerate}
\end{corollary}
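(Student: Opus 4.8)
The plan is to deduce Corollary~\ref{C:T_a} from Theorem~\ref{T:dos} by checking, in each case, which of the equivalent conditions $(a)$--$(d)$ is easiest to verify or refute for the algebra in question. The common preliminary observations are: each of the spaces listed is a separable commutative $F$-algebra over the indicated scalar field, and each supports a dense freely generated subalgebra — for $C^\infty(\R,\C)$ and $C^\infty(\R,\R)$ one takes the subalgebra generated by a single generic function (or simply notes density of polynomials in a suitable generator via Weierstrass/Stone--Weierstrass-type arguments), while for $H(\Omega)$ with $\Omega$ simply connected one uses that polynomials in a biholomorphic coordinate, or in a suitable Fatou--Carleman generator, are dense. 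With these hypotheses in place Theorem~\ref{T:dos} applies, so in each case the existence of a hypercyclic algebra is equivalent to the polynomial range condition $(b)$.

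For part $(i)$, I would argue that on $C^\infty(\R,\C)$ condition $(b)$ holds: given a nonzero one-variable complex polynomial $P$, the map $\widehat P\colon f\mapsto P(f)$ has dense range because over $\C$ one can locally invert $P$ — more concretely, one approximates a target $g\in C^\infty(\R,\C)$ on each compact interval $[-n,n]$ by functions of the form $P\circ f$, using that $P$ is surjective onto $\C$ and, after perturbing $g$ slightly to avoid the finitely many critical values of $P$, one can choose a smooth branch of $P^{-1}\circ g$ on $[-n,n]$; a partition-of-unity/patching argument then produces a global smooth $f$ with $P(f)$ close to $g$ in the seminorm $p_n$. Hence $T$ supports a hypercyclic algebra for every weakly mixing multiplicative $T$, and by clause $(d)$ this algebra may be taken dense and not finitely generated. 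The stated consequence for the translations $T_a$ follows since each $T_a$ on $C^\infty(\R,\C)$ is multiplicative and weakly mixing (indeed mixing), as is classical.

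For parts $(ii)$, $(iii)$ and $(iv)$ the strategy is to \emph{refute} condition $(b)$, which by Theorem~\ref{T:dos} forces that no multiplicative operator can support a hypercyclic algebra. In case $(ii)$, take $P(z)=z^2$: the range of $\widehat P$ on $H(\Omega)$ consists of squares of holomorphic functions on the simply connected domain $\Omega$; a nonvanishing holomorphic function on $\Omega$ has a holomorphic square root, but a function with a simple zero does not, so $\widehat P$ is not dense — concretely, a function with exactly one simple zero (e.g. $z-z_0$ in a coordinate) cannot be uniformly approximated on a compact set surrounding $z_0$ by squares, since the zero count / argument principle obstruction is not destroyed by small perturbations. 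In cases $(iii)$ and $(iv)$ the scalar field is $\R$, and here one uses $P(z)=z^2+1$ (or $z^2$): in $C^\infty(\R,\R)$ the function $P(f)=f^2+1$ is everywhere $\ge 1$, so $\widehat P$ cannot approximate the function $g\equiv 0$ — or indeed any function taking a negative value — in the seminorm $p_0$; the positivity is stable under small $C^0$ perturbations, so the range is not dense. The same positivity obstruction applies to $H_\R(\C)$ since its elements restrict to real-valued functions on $\R$ and the seminorms control sup-norms on real intervals.

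The main obstacle I expect is part $(i)$: verifying condition $(b)$ (equivalently $(c)$) for $C^\infty(\R,\C)$ requires a genuine argument that polynomial self-maps have dense range, i.e.\ that one can solve $P(f)\approx g$ smoothly. The delicate point is handling the critical values of $P$: near a target value that is a critical value of $P$ one cannot choose a smooth single-valued branch of $P^{-1}$, so one must first perturb the target $g$ (and its finitely many relevant jets on the compact $[-n,n]$) to avoid the finite critical set, then invert, then patch — keeping careful track of all derivatives up to order $n$ to stay within the seminorm $p_n$. Once this local-to-global smoothing/inversion lemma is in hand, the rest of the corollary is a routine case check as outlined above, and the ``zero--one'' dichotomy at the end of Theorem~\ref{T:dos} packages $(i)$ versus $(ii)$--$(iv)$ cleanly.
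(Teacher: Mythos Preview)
For parts (ii)--(iv) your approach matches the paper's: refute condition~$(b)$ of Theorem~\ref{T:dos} via the polynomial $t^2$ (your variant $t^2+1$ works equally well). For part~(i), however, you and the paper diverge. The paper does \emph{not} verify condition~$(b)$ directly; instead it invokes the zero--one law by citing \cite[Corollary~20]{BCP2}, which already established that translations on $C^\infty(\R,\C)$ support a hypercyclic algebra. This gives condition~$(a)$ for one specific operator, and since condition~$(b)$ is operator-independent, condition~$(d)$ then follows for \emph{every} weakly mixing multiplicative operator---so (i) becomes a one-line consequence of the cited result. Your route (perturb the target $g$ by a small generic constant to miss the finite critical-value set $C$ of $P$ on $[-n,n]$, then lift through the covering $P\colon\C\setminus P^{-1}(C)\to\C\setminus C$ over the simply connected interval) is correct and self-contained, but carries exactly the analytic burden you yourself flag as the main obstacle. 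The point to make explicit is that the set of ``bad'' constants $c$ for which $(g+c)([-n,n])$ meets $C$ is a finite union of smooth arcs in $\C$, hence of measure zero, so arbitrarily small good $c$ exist; the lift is then automatically smooth because the covering is locally biholomorphic, and $p_n(P(f)-g)=|c|$ since the perturbation has vanishing derivatives. Your approach buys independence from \cite{BCP2}; the paper's approach buys brevity and showcases that the zero--one law reduces the entire question to exhibiting a single known example.
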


\begin{proof}[Proof of Corollary~\ref{C:T_a}]
Notice that the set of complex polynomials on a real variable is dense in $C^\infty(\mathbb{R}, \C)$, and that the inclusion $g:\mathbb{R}\to \mathbb{C}$, $g(t)=t$, freely generates a dense subalgebra of the $F$-algebra $(X, \mathbb{K})=(C^\infty(\mathbb{R}, \C), \mathbb{R})$. Since there exists a multiplicative operator on $X$ supporting a hypercyclic algebra  \cite[Corollary~20]{BCP2}, then $(i)$ follows by Theorem~\ref{T:dos}. 
The pending conclusions $(ii)$-$(iii)$ follow by considering the polynomial $p(t)=t^2$ on Theorem~\ref{T:dos}(c).
Notice that the identity function on $\C$ freely generates a dense subalgebra of $(X, \mathbb{K})=(H(\Omega ),\mathbb{C})$, since $\Omega$ is simply connected, and also of $(X, \mathbb{K})=(H_\mathbb{R}(\mathbb{C}), \mathbb{R})$, since the set of real polynomials on a complex variable is dense on $H_\mathbb{R}(\mathbb{C})$.
Similarly, the identity function on $\mathbb{R}$ freely generates a dense subalgebra of  $(X, \mathbb{K})=(C^\infty(\mathbb{R},\mathbb{R}), \mathbb{R})$. 
\end{proof} 

\begin{remark}
We note that each translation $\tau_a$ $(0\ne a\in\R)$, as well as each convolution operator of the form $P(D)$, where $P$ is a non-constant polynomial with real coefficients with $\inf_{t\in\mathbb{R}}|P(t)|<1$, is mixing on $H_\mathbb{R}(\C )$. This follows by the Kitai Criterion and the fact that for each subset $U$ of the real line with a limit point the set of eigenvectors $\{ e^{uz}: \ u\in U \}$ for $T=\tau_a$ or $P(D)$ has dense $\mathbb{R}$-linear span in $H_\mathbb{R}(\mathbb{C})$.
\end{remark}

In Section 2 we recall the notion of strong algebrability together with basic notation and facts about algebras and generators. In particular, we indicate a Zero-One law for any separable commutative $F$-algebra $X$ that is used to show the main results: Either $X$ supports no freely-generated dense subalgebra or its generic sequence freely-generates a dense subalgebra, see Proposition~\ref{P:00}.  The proofs of Theorem~\ref{T:1} and of Theorem~\ref{T:dos} are shown in Section 3 and Section 4, respectively.

\section{Freely generated algebras and strong algebrability}  \label{SS:1.2}
Throughout this  paper the symbol $X$ denotes an $F$-algebra (i.e., a metrizable and complete topological algebra) over the 
real or complex scalar field $\mathbb{K}$, which is both separable and commutative. We respectively denote by $\mathbb{R}$, $\mathbb{C}$, $\mathbb{N}$ and $\mathbb{N}_0$ the sets of real scalars, complex scalars, positive integers, and non-negative integers.
By  $\mathbb{K} [t]$ and $\mathbb{K} [t_1,\dots ,t_N]$ $(N\in\N)$ we denote the algebras of polynomials with coefficients in $\K$ in one and $N$ variables, respectively. 
A subset $Y$ of $X$ is {\em algebraically dependent} provided there exist pairwise distinct $y_1,\dots, y_n$ in $Y$ and a non-zero polynomial 
$P\in \mathbb{K}[t_1,\dots, t_n]$ with $P(0)=0$ so that $P(y_1,\dots ,y_n)=0$.  We say that $Y$ is {\em algebraically independent } provided it is not algebraically dependent.
The subalgebra generated by  $Y$ is 
\[
\begin{aligned}
A(Y)&=\cap {\{ \mathcal{A}\supset Y:  \mbox{ $\mathcal{A}$ subalgebra of $X$} \}} \\
&= \{ P(y):   y\in Y^n, P\in \mathbb{K}[t_1,\dots, t_n]:P(0)=0, n\in\N \} 
\end{aligned}
\]
and (if $X$ has a unit) the unital algebra generated by $Y$ is 
\[
\begin{aligned}
A_1(Y)&=\cap {\{ \mathcal{A}\supset Y:  \mbox{ $\mathcal{A}$ unital subalgebra of $X$} \}} \\
&= \{ P(y):   y\in Y^n, P\in \mathbb{K}[t_1,\dots, t_n], n\in\N \} .
\end{aligned}
\]
We call $Y$ a set of generators of  the algebra $A(Y)$ and  of the unital algebra $A_1(Y)$ (if $X$ has a unit). 
\vspace{.1in}

\noindent
It is possible for an algebra generated by just two elements to support a subalgebra that is {\em not} finitely generated. Hence when searching for large algebras of hypercyclic vectors we prefer Bartoszewicz and G{\l}{\c a}b's notion of {\em strong algebrability}, formulated in terms of freely generated algebras, over the usual notion of algebrability.

\begin{definition}  \label{densegenerator}
For a cardinal $\kappa$, we say that $A$ is a {\em $\kappa$-generated free-algebra} provided there exists a subset $Y=\{ y_\gamma:   \gamma < \kappa\}$ of $A$ so that every function $f: Y\to A'$ with $A'$ an algebra can be  uniquely extended to an algebra homomorphism $\overline{f}:A\to A'$. 
The set $Y$ is called  a {\em set of free generators} of the algebra $A$. 
\end{definition}
\begin{remark} \label{R:12}
A subset $Y$ of  $X$ generates a free algebra if and only if $Y$ is algebraically independent. Indeed, we have:
\begin{itemize}
\item[(a)] \ 
When $X$ has no unit,  $Y=\{ y_\gamma:   \gamma < \kappa\} \subset X$ is a set of free generators of a subalgebra $A$ of $X$ if and only if the set $\widetilde{Y}$ of elements of the form
\[
y_{\gamma_1}^{\alpha_1} y_{\gamma_2}^{\alpha_2}\cdots y_{\gamma_n}^{\alpha_n}
\]
with $y_{\gamma_1},\dots , y_{\gamma_n}$ in $Y$ pairwise distinct and with $\alpha_1,\dots , \alpha_n \in \mathbb{N}_0$ and not simultaneously zero is linearly independent and 
\[
A=A(Y)=\mbox{span}(\widetilde{Y}).
\]
\item[(b)] \ When $X$ has a unit $e$,  then $Y=\{ y_\gamma:   \gamma < \kappa\} \subset X$ is a set of free generators of a unital subalgebra $A_1$ of $X$ if and only if the set $\widetilde{Y}$ defined as in $(1)$ satisfies both that
$\widetilde{Y}\cup \{ e \}$ is linearly independent and $\widetilde{Y}\cup \{ e \}$ is a spanning set of $A_1$. 



\end{itemize}
\end{remark}

\begin{definition} (Bartoszewicz and G{\l}{\c a}b \cite{barto}) \label{D:strongly}
Let $X$ be a commutative $F$-algebra, and let $\kappa$ be an infinite cardinal. A subset $E$ of $X$ is {\em strongly $\kappa$-algebrable} provided $E\cup\{ 0 \}$ contains a $\kappa$-generated free subalgebra of $X$. The set $E$ is {\em densely strongly $\kappa$-algebrable} provided  $E\cup\{ 0 \}$ contains a $\kappa$-generated dense free subalgebra of $X$. Finally, we say that $E$ is {\em strongly algebrable} (respectively, {\em densely strongly algebrable}), if it is  strongly $\kappa$-algebrable (respectively, densely strongly $\kappa$-algebrable) for some infinite cardinal $\kappa$.
\end{definition}

We use Proposition~\ref{P:00} below to show Theorem~\ref{T:1} and Theorem~\ref{T:dos}; we have been unable to find a reference and provide a proof for the sake of completeness.
\begin{proposition} \label{P:00}\
Let $X$ be a separable commutative $F$-algebra supporting a dense freely generated subalgebra.
Then we have:
\begin{enumerate}
\item[1.]\  For each $N\in\N$, the set of algebraically independent $N$-tuples of elements of $X$ is residual in $X^N$. In particular, the set of $f$ in $X^N$ that generate an algebra $A(f)$ that is not contained in any subalgebra of $X$ induced by fewer than $N$ generators is residual in $X^N$.

\item[2.]\  The set of algebraically independent sequences in $X$ that induce a dense subalgebra of $X$ is residual in $X^\N$. In particular, the set
 of  $f=(f_j)$ in  $X^\N$ whose induced algebra $A(f)$ is not contained in a finitely generated subalgebra of $X$ is residual in $X^\N$.

\end{enumerate}

\end{proposition}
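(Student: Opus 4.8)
The plan is to establish Proposition~\ref{P:00} via the Baire category theorem, exhibiting in each case the relevant set as a countable intersection of dense open sets in the Polish space $X^N$ or $X^\N$.

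\medskip

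\noindent\textbf{Part 1.}\ First I would fix $N\in\N$ and enumerate the non-zero polynomials with $P(0)=0$ in $\K[t_1,\dots,t_N]$ having rational (or rational-plus-rational-times-$i$) coefficients as $(P_k)_{k\in\N}$; since algebraic dependence of an $N$-tuple $(y_1,\dots,y_N)$ in $X$ is witnessed by \emph{some} non-zero polynomial vanishing at $0$, and since any such witness can be approximated coefficient-wise by one in our countable family while still being non-zero on a nonempty open set of tuples, it suffices to show that for each fixed $k$ the set
\[
U_k=\{ f=(f_1,\dots,f_N)\in X^N:\ P_k(f_1,\dots,f_N)\neq 0 \}
\]
is open and dense. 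Openness is immediate from continuity of the polynomial map $\widehat{P_k}:X^N\to X$. For density I would use the hypothesis that $X$ supports a dense freely generated subalgebra: pick a set $Y$ of free generators whose generated algebra $A(Y)$ is dense in $X$, choose $N$ pairwise distinct elements $y^{(1)},\dots,y^{(N)}$ of $Y$, and note that $(y^{(1)},\dots,y^{(N)})$ is algebraically independent by Remark~\ref{R:12}, so $P_k(y^{(1)},\dots,y^{(N)})\neq 0$. Given an arbitrary $f\in X^N$ and a neighborhood, I approximate each coordinate $f_i$ by an element $g_i\in A(Y)$; since $A(Y)$ is itself the free algebra on $Y$, a standard perturbation argument (replacing finitely many generators appearing in the $g_i$ by fresh generators of $Y$, or adding a small generic multiple of a fresh generator) produces a tuple $g=(g_1,\dots,g_N)$ arbitrarily close to $f$ with $P_k(g)\neq 0$; this uses that $\widehat{P_k}$ is not identically zero on $X^N$ together with the fact that the zero set of a non-zero polynomial map cannot contain a neighborhood. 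Then $\bigcap_k U_k$ is residual and consists exactly of the algebraically independent $N$-tuples. The ``in particular'' clause follows because if $A(f)\subset A(z)$ for some $z=(z_1,\dots,z_M)$ with $M<N$, then each $f_i=Q_i(z)$ for polynomials $Q_i$, and eliminating the $M$ variables $z_1,\dots,z_M$ from the $N$ relations forces a non-trivial polynomial relation among $f_1,\dots,f_N$, contradicting algebraic independence.

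\medskip

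\noindent\textbf{Part 2.}\ For the sequence case I would work in $X^\N$ with the product topology, again a Polish space. Combine two families of conditions. The first is the algebraic-independence family: enumerate all non-zero polynomials with $P(0)=0$ and rational coefficients in variables $t_1,\dots,t_n$ over all $n$, and for each such $P_k$ in variables $t_{i_1},\dots,t_{i_n}$ let
\[
V_k=\{ f=(f_j)_{j\in\N}\in X^\N:\ P_k(f_{i_1},\dots,f_{i_n})\neq 0 \},
\]
which is open (continuity, with the evaluation factoring through finitely many coordinates) and dense by the same argument as in Part~1 applied to the finitely many relevant coordinates. The second is the density family: fix a countable dense subset $(d_m)_{m\in\N}$ of $X$ and a countable base $(B_l)_{l\in\N}$ of neighborhoods of $0$ in $X$, and for each pair $(m,l)$ let
\[
W_{m,l}=\{ f=(f_j)_{j\in\N}\in X^\N:\ \exists\, \text{an element of }A((f_j))\text{ lying in }d_m+B_l \}.
\]
Each $W_{m,l}$ is open, because membership is witnessed by a single polynomial combination of finitely many coordinates, which persists under small perturbations of those coordinates (continuity of polynomial operations). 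For density of $W_{m,l}$ I again use a dense free algebra $A(Y)$ of $X$: given $f\in X^\N$ and a basic neighborhood, I first perturb finitely many coordinates of $f$ so that they become free generators $y^{(1)},y^{(2)},\dots$ drawn from $Y$ (possible since only finitely many coordinates are constrained by the basic open set), and then, using that $A(Y)$ is dense, I choose one further coordinate $f_{j_0}$ — lying outside the finitely many constrained indices — and replace it by a suitable element of $X$ close to $0$ which is itself a free generator from $Y$ not yet used, arranging that some polynomial in the $y^{(i)}$'s and $f_{j_0}$ reaches the target ball $d_m+B_l$; this keeps the resulting sequence algebraically independent where required and inside the prescribed neighborhood. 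Then $\bigl(\bigcap_k V_k\bigr)\cap\bigl(\bigcap_{m,l} W_{m,l}\bigr)$ is residual, and any $f$ in it is an algebraically independent sequence with $\overline{A((f_j))}=X$. The ``in particular'' clause follows as in Part~1: if $A(f)$ were contained in a finitely generated subalgebra $A(z_1,\dots,z_M)$, then $f_1,\dots,f_{M+1}$ would satisfy a non-trivial polynomial relation obtained by eliminating $z_1,\dots,z_M$, contradicting algebraic independence of the sequence.

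\medskip

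\noindent\textbf{Main obstacle.}\ The routine parts are the openness statements and the Baire-category bookkeeping. The delicate point I expect to work hardest on is the density of the sets $U_k$, $V_k$, and especially $W_{m,l}$: I need a clean perturbation lemma saying that, starting from any tuple or sequence, I can move it an arbitrarily small amount so as to make a prescribed non-zero polynomial expression non-vanishing (for $U_k,V_k$) or hit a prescribed target ball through the generated algebra (for $W_{m,l}$), while respecting that only finitely many coordinates may be constrained. The mechanism is always the same — exploit a dense free subalgebra $A(Y)$, which gives an abundant supply of ``fresh'' algebraically independent generators that can be inserted into unconstrained coordinates — but formulating it once as a self-contained lemma (essentially: in a separable commutative $F$-algebra with a dense free subalgebra, generic tuples are free and generic sequences are free with dense generated algebra) and then invoking it is the crux of the argument.
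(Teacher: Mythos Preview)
Your argument has a genuine gap at the very first reduction. You claim that the countable intersection $\bigcap_k U_k$, where $k$ ranges over polynomials with rational (or Gaussian rational) coefficients, ``consists exactly of the algebraically independent $N$-tuples.'' This is false. Algebraic dependence is over the scalar field $\mathbb{K}$, not over $\mathbb{Q}$ or $\mathbb{Q}[i]$: a tuple can satisfy a relation with transcendental coefficients while satisfying no relation with rational ones. For a concrete instance take $X=H(\mathbb{C})$ and $f_1(z)=z$, $f_2(z)=\pi z$; then $\pi f_1-f_2=0$, yet one checks (using the transcendence of $\pi$) that no nonzero polynomial with Gaussian rational coefficients annihilates $(f_1,f_2)$. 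So $\bigcap_k U_k$ is a strict superset of the independent tuples, and showing it to be residual says nothing about what you want. Your parenthetical justification (``any such witness can be approximated \dots\ while still being non-zero on a nonempty open set of tuples'') points in the wrong direction: you would need the approximating $P_k$ to \emph{vanish} at $f$, not to be non-zero.

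The paper avoids this by never enumerating individual polynomials. Instead it writes the dependent tuples as $\bigcup_m F_m$, where $F_m$ is the set of $f$ killed by \emph{some} polynomial in the compact set $\mathcal{P}_m$ of polynomials with degree in $[1,m]$ and all nonzero coefficients in the annulus $[\tfrac{1}{m},m]$; compactness of $\mathcal{P}_m$ and continuity of $(R,f)\mapsto R(f)$ make each $F_m$ closed. To show $F_m$ has empty interior the paper does \emph{not} appeal to a general principle like ``the zero set of a non-zero polynomial map cannot contain a neighborhood'' --- that is precisely the statement at issue in an abstract $F$-algebra --- but gives an explicit construction: approximate any tuple by $(P_1(g),\dots,P_N(g))$ with $g$ a tuple of free generators and with $\deg P_j>(m+1)\deg P_{j-1}$, so that for any $R\in\mathcal{P}_m$ the composite $R(P_1,\dots,P_N)$ has a unique term of maximal degree and hence cannot vanish when evaluated at the free generators. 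Your density argument for $U_k$ outsources exactly this step to a ``standard perturbation argument,'' which is the substantive content you need to supply.

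For Part~2 you over-engineer the density family. The paper's argument is one line: given a basic open set $U_1\times\cdots\times U_N\times X\times X\times\cdots$ and a target $V_k$, pick any $f_i\in U_i$ and any $g\in V_k$, and take the sequence $(f_1,\dots,f_N,g,g,\dots)$; then $g$ itself lies in the generated algebra. No free generators or perturbations are needed here. The algebraic-independence half of Part~2 then follows from Part~1 by projecting onto the first $N$ coordinates, exactly as the paper does.
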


\begin{remark}
Natural examples of separable commutative $F$-algebras that support a dense freely-generated subalgebra include function algebras such as algebras of holomorphic (respectively, smooth) functions of finitely many variables, the disc algebra, etc. Another natural example is the algebra $\ell_1$ of absolutely convergent sequences with respect to the convolution product. Here 
 $e_0=(1,0,\dots)$ is the unit and  the singleton $\{ e_1\} =\{ (0,1,0,\dots )\}$ freely generates a dense subalgebra.  Indeed, recall that the  {\em convolution } product $x\ast y$ of two elements $x,y\in \ell_1$ is given by
\[
(x\ast y)(n):= \sum_{k=0}^n  x(k)y(n-k) \ \ \ \ (n=0,1,\dots ).
\]
So $e_1^{n}=e_n$ for each $n\in\mathbb{N}_0$ and for any polynomial $P=\sum_{j=0}^r a_j t^j$ we have 
\[
P(e_1)=\sum_{j=0}^r a_j e_1^j =\sum_{j=0}^r a_j e_j,
\]
and  $
A_1(e_1)=\{ P(e_1) :\ P\in \mathbb{K}[t] \}
$ is dense in $\ell_1$. That is, $\{ e_1^n \}_{n\in\N}\cup \{ e_0 \}$ is linearly independent and it spans a dense subspace of $\ell_1$.
\end{remark}


The remaining of the section is devoted to show Proposition~\ref{P:00}.
We first note that an $N$-tuple of polynomials in $k$ variables must be algebraically dependent if $N>k$.
For each $\alpha=(\alpha_1,\dots, \alpha_N) \in \mathbb{C}^N$ we let $|\alpha|:=\sum_{j=1}^N |\alpha_j|$ and $|\alpha|_\infty :=\mbox{max}_{1\le j\le N}{|\alpha_j|}$.

\begin{proposition} \label{P:4}
Let $P_1,\dots ,P_N$ in $\mathbb{K} [t_1,\dots ,t_k]$ with $N>k$. Then there exists a nonzero polynomial $R\in \mathbb{K} [t_1,\dots ,t_N]$ with $R(0)=0$ such that $R(P_1,\dots ,P_N)=0$. 
\end{proposition}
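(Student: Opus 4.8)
\textbf{Proof plan for Proposition~\ref{P:4}.}

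The plan is to reduce the claim to a counting/dimension argument about monomials. First, note that it suffices to find \emph{any} nonzero $R\in\mathbb K[t_1,\dots,t_N]$ with $R(P_1,\dots,P_N)=0$: once such an $R$ is found, write $R=R(0)+R_0$ where $R_0$ has zero constant term; if $R(0)\ne 0$ then $R_0(P_1,\dots,P_N)=-R(0)$ is a nonzero constant, and since the $P_i$ have no constant term (we may assume this by replacing $P_i$ with $P_i-P_i(0)$, which only changes $R$ by an invertible affine substitution) evaluating at $t_1=\dots=t_k=0$ forces $-R(0)=0$, a contradiction; hence actually $R(0)=0$ automatically, or else we simply take $R_0$ in place of $R$ — I will phrase this cleanly so the $R(0)=0$ normalization comes for free.

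The core of the argument is a comparison of dimensions of finite-dimensional vector spaces. Let $d=\max_i\deg P_i$. For a degree parameter $m\in\N$, consider the linear map sending a polynomial $R\in\mathbb K[t_1,\dots,t_N]$ of degree $\le m$ to the polynomial $R(P_1,\dots,P_N)\in\mathbb K[t_1,\dots,t_k]$, which has degree $\le dm$. The domain has dimension $\binom{N+m}{N}$, a polynomial in $m$ of degree $N$, while the target space of polynomials in $k$ variables of degree $\le dm$ has dimension $\binom{k+dm}{k}$, a polynomial in $m$ of degree $k$. Since $N>k$, for $m$ large enough we have $\binom{N+m}{N}>\binom{k+dm}{k}$, so the linear map has nontrivial kernel; any nonzero element of that kernel is the desired $R$.

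The steps in order: (1) reduce to the case $P_i(0)=0$ and observe this makes the normalization $R(0)=0$ automatic; (2) fix $d=\max_i \deg P_i$ and define for each $m$ the $\mathbb K$-linear ``composition'' map on polynomials of bounded degree; (3) compute (or just bound) the two dimensions as binomial coefficients and note they are polynomials in $m$ of degrees $N$ and $k$ respectively; (4) invoke $N>k$ to choose $m$ with strict inequality, apply rank–nullity to get a nonzero $R$ in the kernel. The main (and really only) obstacle is the bookkeeping in step~(1)–(3): making sure the constant-term normalization is handled without circularity and stating the degree bound $\deg R(P_1,\dots,P_N)\le dm$ correctly; everything else is the standard pigeonhole-via-dimension argument and requires no genuine difficulty.
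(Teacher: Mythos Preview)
Your approach is essentially the same as the paper's: both set $d=\max_i\deg P_i$, view ``compose with $(P_1,\dots,P_N)$'' as a linear map between finite-dimensional spaces of polynomials of bounded size, and use $N>k$ to force the domain dimension to exceed the target dimension for a large enough parameter. The paper uses $|\cdot|_\infty$-bounds on the multi-indices (so the counts are $(q+1)^N-1$ versus $(Ndq+1)^k$) while you use total-degree bounds and binomial coefficients; this is purely cosmetic.

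One small wobble: your reduction ``WLOG $P_i(0)=0$'' does not quite deliver $R(0)=0$ for the \emph{original} $P_i$. If you find $R'$ with $R'(0)=0$ and $R'(P_1-c_1,\dots,P_N-c_N)=0$, the pullback $R(t)=R'(t_1-c_1,\dots,t_N-c_N)$ satisfies $R(P)=0$ but $R(0)=R'(-c)$ need not vanish. The cleanest fix is the paper's: simply run the dimension count over polynomials with zero constant term from the start (your domain dimension becomes $\binom{N+m}{N}-1$, which still beats $\binom{k+dm}{k}$ for large $m$). Alternatively, once you have any nonzero $R$ with $R(P)=0$, replace it by $t_1R$, which vanishes at the origin and still annihilates $(P_1,\dots,P_N)$.
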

\begin{proof}
Let $d=\max \{\deg P_1,\dots ,\deg P_N\}$, and choose $q\in \N$ such that 
\begin{equation}\label{eq:P4}
q^N>(Ndq+1)^k.\end{equation}
The vector space 
\[
W:=\mbox{span}\{ t^\beta=t_1^{\beta_1}\cdots t_k^{\beta_k}: \ \beta\in\N_0^k \mbox{ with }  | \beta |_\infty \le Ndq \}
\]
has dimension $\mbox{dim}(W)=(Ndq+1)^k$ and contains the indexed set
\[
E:=\{ \prod_{i=1}^N P_i^{\alpha_i} \}_{\{ \alpha\in\N_0^N:  1\le |\alpha|_\infty\le q \}}
\]
which by $\eqref{eq:P4}$  must be linearly dependent. So there exists a non-trivial linear combination
\[
\sum_{ \{ \alpha\in\N_0^N: \ 1\le |\alpha|_\infty \le q \} } c_\alpha \prod_{i=1}^N P_i^{\alpha_i}=0,
\]
and the conclusion holds for $R:= \sum_{\{ \alpha\in\N_0^N:\ 1\le |\alpha|_\infty\le q\}} c_\alpha \prod_{i=1}^N t_i^{\alpha_i}$.  
\end{proof}
\begin{corollary}  \label{C:05}
Let $f=(f_1,\dots, f_N)\in X^N$ so that $A(f)\subset A(h)$ 
for some $h\in X^k$ with $k<N$. Then  $\{ f_1,\dots, f_N\}$ is algebraically dependent.   
\end{corollary}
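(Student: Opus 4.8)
The plan is to reduce the statement directly to Proposition~\ref{P:4} by transporting a polynomial identity over $\mathbb{K}$ through the evaluation homomorphism at $h$. First I would dispose of the degenerate case: if $f_1,\dots,f_N$ are not pairwise distinct, say $f_i=f_j$ with $i\ne j$, then the nonzero polynomial $t_i-t_j$ vanishes at the origin and kills $(f_1,\dots,f_N)$, so $\{f_1,\dots,f_N\}$ is already algebraically dependent and there is nothing to do. Hence I may assume $f_1,\dots,f_N$ are pairwise distinct.

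Next, using the description of a finitely generated subalgebra recalled above, the hypothesis $A(f)\subset A(h)$ with $h=(h_1,\dots,h_k)$ furnishes, for each $i\in\{1,\dots,N\}$, a polynomial $P_i\in\mathbb{K}[t_1,\dots,t_k]$ with $P_i(0)=0$ and $f_i=P_i(h_1,\dots,h_k)$. Since $N>k$, Proposition~\ref{P:4} applied to $P_1,\dots,P_N$ yields a nonzero polynomial $R\in\mathbb{K}[t_1,\dots,t_N]$ with $R(0)=0$ and $R(P_1,\dots,P_N)=0$ as an identity in $\mathbb{K}[t_1,\dots,t_k]$.

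Finally I would apply the evaluation homomorphism $\varepsilon\colon\mathbb{K}[t_1,\dots,t_k]\to X$ determined by $t_j\mapsto h_j$. Since $\varepsilon$ is an algebra homomorphism, it commutes with composition of polynomials, so
\[
R(f_1,\dots,f_N)=R\bigl(P_1(h_1,\dots,h_k),\dots,P_N(h_1,\dots,h_k)\bigr)=\varepsilon\bigl(R(P_1,\dots,P_N)\bigr)=\varepsilon(0)=0.
\]
As $R$ is nonzero with $R(0)=0$ and $f_1,\dots,f_N$ are pairwise distinct, this exhibits $\{f_1,\dots,f_N\}$ as algebraically dependent, which is what we wanted.

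I do not expect any genuine obstacle here. The only point requiring a moment's care is the bookkeeping behind the displayed chain of equalities: the identity $R(P_1,\dots,P_N)=0$ lives in the polynomial ring over $\mathbb{K}$, and one must observe that it transfers to the identity $R(f_1,\dots,f_N)=0$ in $X$ precisely because $\varepsilon$ respects substitution of polynomials into polynomials. Everything else is immediate from the explicit form of $A(h)$ and from Proposition~\ref{P:4}.
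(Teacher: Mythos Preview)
Your proof is correct and follows essentially the same route as the paper: express each $f_i$ as $P_i(h)$, invoke Proposition~\ref{P:4} to get a nonzero $R$ with $R(0)=0$ and $R(P_1,\dots,P_N)=0$, then evaluate at $h$. The separate treatment of the case where the $f_i$ are not pairwise distinct is superfluous---the main argument already covers it, since Proposition~\ref{P:4} imposes no distinctness assumption on the $P_i$---and the paper simply omits that case.
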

\begin{proof}
By the assumption there exist $P_1,\dots ,P_N \in \mathbb{K}[t_1,\dots ,t_k]$ such that \[
P_i(h_1,\dots ,h_k)=f_i \ \ \ (1\leq i\leq N).\] Since $N>k$, by Proposition~\ref{P:4} there exists $0\ne R\in \mathbb{K}[t_1,\dots ,t_N]$ with $R(0)=0$ such that $R(P_1,\dots ,P_N)=0$. In particular,
 \[    R(f_1,\dots ,f_N)            =R(P_1(h_1,\dots ,h_k),\dots ,P_N(h_1,\dots ,h_k))=0.\]
\end{proof}

We note the following elementary topological fact which we use to show Proposition~\ref{P:07} below.
\begin{remark} \label{R:00}
If $K, Y, Z$ are Hausdorff topological spaces with $K$ compact and if $g:K\times Y\rightarrow Z$ is a continuous map, then the set
\[
\{x\in Y: \exists k\in K \,\, \mbox{so that} \,\, g(k,x)\in A\}
\]
is closed for each closed subset $A$ of $Z$.
\end{remark}
\begin{proposition} \label{P:07}
Let $X$ be a separable commutative  $F$-algebra that contains a freely generated dense subalgebra and let $N\ge 2$. Then the set 
$F$ of $N$-tuples $f=(f_1,\dots, f_N)\in X^N$ that are  algebraically dependent is of first category in $X^N$.
\end{proposition}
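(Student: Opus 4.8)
The plan is to write the set $F$ of algebraically dependent $N$-tuples as a countable union of closed sets with empty interior, so that $F$ is of first category. The natural decomposition is along the non-zero polynomials witnessing the dependence. First I would enumerate a countable dense-enough family of polynomials: fix a countable dense subfield $\mathbb{K}_0$ of $\mathbb{K}$ (e.g. $\mathbb{Q}$ or $\mathbb{Q}(i)$), and let $\{P_m\}_{m\in\mathbb{N}}$ enumerate all non-zero polynomials in $\mathbb{K}_0[t_1,\dots,t_N]$ with $P_m(0)=0$. For each $m$ and each compact set $K\subset X$ from an exhausting sequence of compact sets — or, more simply, for each closed ball $B_R$ of radius $R\in\mathbb{N}$ in a compatible metric on $X$ — set
\[
F_{m,R}=\{ f\in X^N : \widehat{P_m}(f)=0 \text{ and } f\in B_R^N \}.
\]
Each $F_{m,R}$ is closed because $\widehat{P_m}:X^N\to X$ is continuous (polynomial operations in an $F$-algebra are continuous) and $\{0\}$ is closed. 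The key point is that $F=\bigcup_{m,R}F_{m,R}$: if $f$ is algebraically dependent, there is a non-zero $P\in\mathbb{K}[t_1,\dots,t_N]$ with $P(0)=0$ and $P(f)=0$, but a priori $P$ has coefficients in $\mathbb{K}$, not $\mathbb{K}_0$; here one uses that the finitely many monomials $\{f_1^{\alpha_1}\cdots f_N^{\alpha_N}\}$ occurring in $P$ span a finite-dimensional subspace, and a non-trivial linear relation with $\mathbb{K}$-coefficients among finitely many vectors forces the existence of one with $\mathbb{K}_0$-coefficients (density of $\mathbb{K}_0$ in $\mathbb{K}$ and the fact that the annihilator is a $\mathbb{K}$-subspace whose rational points are dense in it — more carefully, pass to a basis of the span and solve the linear system, which has coefficients expressible from the relation, then approximate). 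This is a routine but slightly delicate linear-algebra observation; I would phrase it as: the set of $(c_\alpha)$ giving a relation is a non-trivial $\mathbb{K}$-linear subspace of $\mathbb{K}^{(\text{finite})}$, hence contains a non-zero vector with coordinates in $\mathbb{K}_0$.

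Next I would show each $F_{m,R}$ has empty interior. Suppose not: then some non-empty open subset $U$ of $X^N$ lies in $F_{m,R}$, so $\widehat{P_m}$ vanishes identically on $U$. Using that $X$ contains a dense freely generated subalgebra $A(Y)$ with free generator set $Y=\{y_\gamma\}$, I would pick $N$ algebraically independent elements $g_1,\dots,g_N$ from $A(Y)$ (possible since $Y$, being a set of free generators of an infinite-dimensional or at least size-$\ge N$ algebra, contains $N$ distinct, hence algebraically independent, generators; if $|Y|<N$ one reduces, but density of $A(Y)$ in the infinite-dimensional $X$ forces $|Y|$ infinite). Then $(g_1,\dots,g_N)$ is algebraically independent, so $\widehat{P_m}(g_1,\dots,g_N)\ne 0$. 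The obstruction to concluding directly is that $(g_1,\dots,g_N)$ need not lie in $U$. To fix this: by density of $A(Y)^N$ in $X^N$, translate — pick any point $f^0\in U$ and approximate it by a tuple in $A(Y)^N$; better, argue that the map $\widehat{P_m}$ restricted to $A(Y)^N$ is, via the universal property, given by composing with a polynomial map between the (countably many variables) polynomial algebra, and a polynomial in $N$ variables vanishing on a non-empty open subset of $X^N$ must vanish on all of $A(Y)^N$ when $A(Y)$ is free — because evaluations at algebraically independent tuples are Zariski-dense in a suitable sense. Concretely: fix $f^0\in U$; for $g=(g_1,\dots,g_N)$ algebraically independent in $A(Y)$ and $t$ a small scalar, the tuple $f^0+t g$ lies in $U$ for $t$ near $0$, so $t\mapsto \widehat{P_m}(f^0+tg)$ is an element of $X$ vanishing on a neighborhood of $0$; but expanding, $\widehat{P_m}(f^0+tg)=\sum_{k} t^k w_k$ is a "polynomial in $t$" with coefficients $w_k\in X$, and the top-degree coefficient (in $t$) is $Q(g_1,\dots,g_N)$ where $Q$ is the top homogeneous part of $P_m$, hence non-zero by algebraic independence of $g$ — contradicting that all $w_k=0$. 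This is the main obstacle, and the cleanest route is precisely this "freeze a free tuple, move along a line, compare top-degree coefficients" argument; one must check the top homogeneous part $Q$ of $P_m$ is non-zero (true since $P_m\ne 0$) and that $Q(g)\ne 0$ (true by algebraic independence, noting $Q$ may vanish at $0$ but $Q\ne 0$ as a polynomial, and algebraically independent elements cannot satisfy any non-zero polynomial — including one not vanishing at the origin, after the standard reduction of subtracting the constant term, which here is $0$ anyway for the homogeneous top part only if it has positive degree; if $P_m$ is itself homogeneous of degree $0$ it is a non-zero constant, impossible since $P_m(0)=0$, so $\deg P_m\ge 1$ and $Q$ has degree $\ge 1$).

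Finally, I would assemble: $F=\bigcup_{m,R}F_{m,R}$ is a countable union of closed nowhere-dense sets, hence of first category in $X^N$. The bulk of the write-up is the nowhere-density of each $F_{m,R}$; the rest is bookkeeping. I expect no difficulty beyond the linear-algebra reduction to countably many polynomials and the line-perturbation argument just sketched; both are standard, and the freely-generated-dense-subalgebra hypothesis is used exactly once, to manufacture an algebraically independent tuple whose existence defeats any candidate vanishing polynomial.
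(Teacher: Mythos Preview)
Your proposal has two genuine gaps.

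First, the reduction to polynomials with coefficients in a countable dense subfield $\mathbb{K}_0$ is incorrect: a non-trivial $\mathbb{K}$-linear relation among fixed vectors in $X$ need \emph{not} yield a non-trivial $\mathbb{K}_0$-linear one. Concretely, in $X=H(\C)$ with $\mathbb{K}_0=\mathbb{Q}(i)$, take $f=(z,\pi z)$. Then $\pi f_1-f_2=0$, but any relation $\sum_\alpha c_\alpha f^\alpha=0$ with $c_\alpha\in\mathbb{Q}(i)$ becomes $\sum_\alpha c_\alpha \pi^{\alpha_2} z^{|\alpha|}=0$, and linear independence of $\{z^k\}$ forces $\sum_{|\alpha|=k} c_\alpha \pi^{\alpha_2}=0$ for every $k$; transcendence of $\pi$ over $\mathbb{Q}(i)$ then gives all $c_\alpha=0$. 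So $f\in F$ yet $f\notin\bigcup_{m,R}F_{m,R}$, and your decomposition misses it. The paper avoids this by grouping the witnessing polynomials into \emph{compact} sets $\mathcal{P}_m$ (degree at most $m$, each non-zero coefficient of modulus in $[\tfrac{1}{m},m]$, vanishing at the origin); closedness of each $F_m=\{f:\exists R\in\mathcal{P}_m,\ R(f)=0\}$ then follows from compactness of $\mathcal{P}_m$ and continuity of $(R,f)\mapsto R(f)$.

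Second, your claim that density of $A(Y)$ in an infinite-dimensional $X$ forces $|Y|$ to be infinite is false: the identity function singly generates a dense free subalgebra of $H(\C)$. When $|Y|<N$, \emph{every} $N$-tuple in $A(Y)$ is algebraically dependent by Proposition~\ref{P:4}, so you cannot pick an algebraically independent $g$ there; and whether such a $g$ exists in $X$ is precisely what the proposition is meant to establish, so invoking it is circular. Your line-perturbation idea is salvageable, because you only need $Q(g)\neq 0$ for the fixed top homogeneous part $Q$: with a single free generator $y$ one can take $g_i=y^{d_i}$ for $d_i$ growing fast enough that $\alpha\mapsto\sum_i\alpha_i d_i$ is injective on the support of $Q$. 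This degree-separation device is exactly what the paper implements directly, approximating a point of $U$ by $(P_1(g),\dots,P_N(g))$ with $\deg P_j>(m+1)\deg P_{j-1}$ and showing that a unique monomial $P^\beta$ attains the maximal degree in $R(P_1,\dots,P_N)$, contradicting the linear independence of the monomials in the free generators.
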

\begin{proof}

For each non-zero $P\in\mathbb{K} [t_1,\dots, t_N]$, let $M(P)$  and $m(P)$ respectively denote the largest and smallest moduli of the non-zero coefficients of $P$. So
\[
F=\cup_{m\in\N} F_m,
\]
where for each $m\in \N$
\[
F_m=\{ f=(f_1,\dots, f_N)\in X^N:\ \exists R\in \mathcal{P}_m: \ R(f_1,\dots, f_N)=0 \}
\]
and where $\mathcal{P}_m$ consists of those  $R$ in $\mathbb{K} [t_1,\dots, t_N]$ which simultaneously vanish at the origin, have degree in $[1,m]$ and $\frac{1}{m} \le m(R)\le  M(R) \le m$.
%
Notice that each $F_m$ is closed by Remark~\ref{R:00}, thanks to the compactness of $\mathcal{P}_m$ and the continuity of the map
\[
\mathcal{P}_m\times X^N\to X, \ (R, f)\mapsto R(f).
\]
It suffices to show that $F_m$ has empty interior. By means of contradiction, suppose that $F_m$ contains a non-empty open subset $U$ of $X^N$.

\underline{Case 1}: $X$ has a unit $e$. By Remark~\ref{R:12}, we know that $X=\overline{A_1(Y)}$ for some subset $Y$ for which the set $\widetilde{Y}$ of finite products 
\[
y_1^{\alpha_1}\cdots y_n^{\alpha_n}
\]
with $n\in\N$, with $y_1,\dots, y_n\in Y$ pairwise distinct, and with $\alpha_1,\dots, \alpha_n\in\N_0$ not simultaneously zero, satisfies that $\widetilde{Y}\cup\{e\}$ is linearly independent.  Since $\times_{j=1}^N A_1(Y)$ is dense in $X^N$ there exist
 $g=(g_1,\dots, g_r)\in Y^r$ with $g_1,\dots , g_r$ pairwise distinct and $P=(P_1,\dots P_N)$ in $\mathbb{K} [t_1,\dots, t_r]^N$  so that
\[
P(g)=(P_1(g),\dots, P_N(g))\in U.
\]
Without loss of generality we may assume that $d_1:=\mbox{deg}(P_1)>0$ and
\begin{equation} \label{eq:deg}
d_{j}:=\mbox{deg}(P_j)>(m+1) d_{j-1}   \ \ (j=2,\dots, N).
\end{equation}
Since $P(g)\in F_m$ there exists $R\in\mathcal{P}_m$ so that
\begin{equation}\label{eq:1.4}
R(P_1(g),\dots, P_N(g))=0.
\end{equation}
Write \[ R=\sum_{\alpha\in A} c_\alpha t^\alpha,\]
where $A\subset \N_0^N$ is finite and non-empty and so that  $1\le |\alpha|\le m$ and $\frac{1}{m}\le |c_\alpha|\le m$ for each $\alpha\in A$. Now, let $A_0:=A$ and $\beta_N:=\mbox{max}\{ \alpha_N: \ \alpha\in A_0\}$, and inductively for $k=1,\dots, N-1$ let
\[
A_k:=\{ \alpha\in A_{k-1}: \ \alpha_{N-k+1}=\beta_{N-k+1}\}  \mbox{ and } \beta_{N-k}:=\mbox{max}\{ \alpha_{N-k}: \ \alpha\in A_k\}.
\]
Then $\beta=(\beta_1,\dots,\beta_N)\in A$, and for each $\alpha\in A\setminus\{\beta\}$ either $\alpha_N<\beta_N$ or there exists  $1\le k< N$ satisfying
\[
\alpha_k<\beta_k \mbox{ and } (\alpha_{k+1},\dots,\alpha_N)=(\beta_{k+1},\dots,\beta_N).
\]
In the former case, by $\eqref{eq:deg}$ we have
\begin{align*}
\mbox{deg}(P^\alpha)=\sum_{i=1}^N \alpha_i d_i &\le (m+1) d_{N-1} +\alpha_N d_N \\
&< \beta_N d_N \le \mbox{deg}(P^\beta).
\end{align*}
In the latter case, again by $\eqref{eq:deg}$ we have
\begin{align*}
\mbox{deg}(P^\alpha) = \sum_{i=1}^N \alpha_i d_i 
&\le (m+1) d_{k-1}+  \sum_{i=k}^N \alpha_i d_i            \\ 
&<  d_k + (\beta_k-1) d_k+    \sum_{i=k+1}^N \beta_i d_i                         \\  
&\le   \mbox{deg}(P^\beta).     %
\end{align*}
That is, in either case the maximum degree of 
$
R(P)=\sum_{\alpha\in A} c_\alpha P^\alpha
$
occurs only at the term $P^\beta$, forcing by $\eqref{eq:1.4}$ the linear dependence of $\widetilde{Y}\cup\{ e\}$, a contradiction.

\underline{Case 2}: $X$ has no unit.   We follow the proof of Case 1, except that here we replace $A_1(Y)$ by $A(Y)=\mbox{span}(\widetilde{Y})$ where
only $\widetilde{Y}$ is linearly independent,
and the polynomials $P=(P_1,\dots, P_N)$ also satisfy $P_1(0)=\dots =P_N(0)=0$, forcing the non-zero polynomial $R(P)\in \mathbb{K}[t_1,\dots, t_r]$ to have no constant term, what together with $\eqref{eq:1.4}$ contradicts the linear independence of $\widetilde{Y}$. 
\end{proof}

\begin{lemma}\label{L:15}
Let $X$ be a separable $F$-algebra. Then the set of $f$ in $X^\N$  that induce a dense algebra on $X$ is residual in $X^\N$. 
\end{lemma}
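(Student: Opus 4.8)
\textbf{Proof plan for Lemma~\ref{L:15}.}
The plan is to exhibit the desired set as a countable intersection of dense open subsets of $X^\N$ and invoke the Baire category theorem, which applies since $X$ is a separable $F$-algebra and hence $X^\N$ is a completely metrizable separable space. First I would fix a countable dense subset $\{ u_k : k\in\N \}$ of $X$ together with a countable basis $\{ V_j : j\in\N \}$ of neighborhoods of $0$ (coming from the metric), so that ``the algebra generated by $f$ is dense'' is equivalent to the statement that for every $k$ and every $j$ there is some element of $A(f)$ lying in $u_k + V_j$. Here I will use that $A(f)$ is spanned by the monomials $f_{i_1}^{\alpha_1}\cdots f_{i_n}^{\alpha_n}$ in the coordinates of $f$, so membership in $A(f)$ is witnessed by a single polynomial $P\in\mathbb{K}[t_1,\dots,t_n]$ with $P(0)=0$ and finitely many coordinates of $f$.

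Next I would define, for each pair $(k,j)$,
\[
G_{k,j} := \{ f\in X^\N : \exists n\in\N,\ \exists P\in\mathbb{K}[t_1,\dots,t_n] \text{ with } P(0)=0,\ P(f_1,\dots,f_n)\in u_k + V_j^\circ \},
\]
where $V_j^\circ$ denotes the interior of $V_j$ (taken open without loss of generality). Each $G_{k,j}$ is open in $X^\N$: if $f\in G_{k,j}$ with witness $P$ in $n$ variables, then the map $(g_1,\dots,g_n)\mapsto P(g_1,\dots,g_n)$ is continuous on $X^n$, so a small enough perturbation of the first $n$ coordinates (and arbitrary change of the rest) keeps $P(g_1,\dots,g_n)$ inside the open set $u_k+V_j^\circ$. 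To see density of $G_{k,j}$, take any basic open box $\prod_{i=1}^m W_i \times X\times X\times\cdots$ in $X^\N$; I need to find $f$ in this box with some $P(f_1,\dots,f_n)$ close to $u_k$. One may simply use a fresh coordinate: pick $f_i$ arbitrary in $W_i$ for $i\le m$, and choose $f_{m+1}$ to approximate $u_k$ well — but $f_{m+1}$ need not be expressible with $P(0)=0$ unless $u_k$ itself lies in the dense freely generated image, so instead I would invoke that the set of finite linear combinations $\sum c_\ell f_{i_\ell}^{\alpha_\ell}$ is dense: since $X$ is separable we may, by a standard argument, approximate $u_k$ by a polynomial (with zero constant term) in finitely many new coordinates that are themselves chosen freely far out in the sequence. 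Concretely, because $u_k$ can be approximated by elements of $X$ and the coordinate functionals let us place any prescribed value in any tail coordinate, the single monomial $P(t)=t$ in the variable $t_{m+1}$, with $f_{m+1}$ chosen within $V_j^\circ$ of $u_k$, already does the job: $P(f_{m+1})=f_{m+1}\in u_k+V_j^\circ$, and this choice is compatible with the prescribed box. Hence $G_{k,j}$ is dense open.

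Finally, $\bigcap_{k,j} G_{k,j}$ is residual by Baire, and any $f$ in this intersection has $A(f)$ meeting every $u_k+V_j^\circ$, hence $\overline{A(f)}\supseteq \{u_k\}$ for all $k$, so $\overline{A(f)}=X$; that is, $f$ induces a dense algebra. The only delicate point — and the one I would take care to phrase correctly — is the density of $G_{k,j}$: one must make sure that pinning down finitely many initial coordinates to stay inside the given box does not obstruct producing an element of $A(f)$ near $u_k$, which is why it is cleanest to realize the approximant using a monomial in a single fresh tail coordinate rather than trying to build it from the constrained initial block. Everything else (openness, the reduction to countably many conditions, the Baire step) is routine. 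I expect the main obstacle to be purely bookkeeping: writing the witness-polynomial formalism for ``$g\in A(f)$'' carefully enough that continuity and the perturbation argument for openness are transparent.
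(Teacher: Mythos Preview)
Your proposal is correct and follows essentially the same approach as the paper: write the target set as a countable intersection $\bigcap G_{k,j}$ (the paper uses a countable base $\{V_k\}$ of open sets directly rather than translates $u_k+V_j$, a cosmetic difference), prove openness via continuity of polynomial evaluation on finitely many coordinates, and prove density by placing the desired target in a fresh tail coordinate using the monomial $P(t)=t$. One small remark: your density paragraph takes an unnecessary detour---since the tail coordinate is unconstrained you may simply set $f_{m+1}$ to be any element of $u_k+V_j^\circ$ (indeed $u_k$ itself), exactly as the paper does; no approximation or appeal to anything ``freely generated'' is needed.
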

\begin{proof}
We show that
$$
A=\{f=(f_n)_{n=1}^{\infty}\in X^{\N}: A(f) \,\, \mbox{is dense in} \,\, X\}
$$
is a dense $G_{\delta}$ subset of $X^{\N}$. 
Let $\{V_k\}_{k=1}^{\infty}$ be a countable base for the topology of $X$, and for each $k\in \N$ define
$$
A_k=\{ f \in X^{\N}: A(f)\cap V_k\neq \emptyset \}.
$$
It clearly holds that 
$
A=\bigcap_{k=1}^{\infty}A_k
$
so, it suffices to show that each set $A_k$ is open and dense in $X^{\N}$. To see that $A_k$ is open, let $f=(f_n)_{n=1}^{\infty}\in A_k$.   So there exists some $g\in A(f)\cap V_k$, say $g=P(f_1,\dots, f_r)$ for some $r\in\N$ and $P\in \mathbb{K}[t_1,\dots, t_r]$ with $P(0)=0$. By the continuity of the map $h\in X^r\mapsto P(h)\in X$, there exist $U_1,\dots ,U_r$ neighborhoods of $f_1,\dots, f_r$ respectively such that $P(h_1,\dots ,h_r) \in V_k$ for each $(h_1,\dots ,h_p)\in U_1\times \dots \times U_r$. So, the set $U_1\times \dots \times U_r\times X\times X\times \dots $ is an open neighborhood of $f$ contained in $A_k$.  To show the density of $A_k$, let $U_1\times \dots \times U_N\times X\times X\times \dots $ be a basic open subset of $X^{\N}$. Then for arbitrary $f_i \in U_i$  $(i=1,\dots ,N)$ and $g\in V_k$ we have
$$
(f_1,\dots ,f_N,g,g,\dots)\in [U_1\times \dots \times U_N\times X\times X\times \dots]\cap A_k.
$$ \end{proof}

\begin{remark}\label{R:16}
We note that Lemma~\ref{L:15} holds for arbitrary separable $F$-algebras. 
\end{remark}

We are ready to show Proposition~\ref{P:00}.
\begin{proof}[Proof of Proposition~\ref{P:00}]
Part (1) follows by  Proposition~\ref{P:07}, Lemma~\ref{L:15}, and Corollary~\ref{C:05}. To see (2), 
let $B$ denote the set of sequences in $X$ that are algebraically independent. So
\[
X^\N\setminus B \subseteq \cup_{N=2}^\infty F_N,
\]
where 
\[
F_N=\{ g\in X^\N:   \ (g_1,\dots, g_N) \mbox{ is algebraically dependent} \},
\]
and it suffices to show that each $F_N$ is meager in $X^\N$.  Now, by Proposition~\ref{P:07} the set
\[
S_N:= \{ h\in X^N:   \ (h_1,\dots, h_N) \mbox{ is algebraically dependent} \}.
\]
is meager in $X^N$, so $S_N\times X^\N$ is meager in $X^N\times X^\N$.
But the homeomorphism 
\[f\in X^\N \mapsto ((f_j)_{j=1}^N, (f_s)_{s>N})\in X^N\times X^\N\] identifies $F_N$ with the set $S_N\times X^\N$. So $B$ is residual in $X^\N$, and the conclusion follows by Lemma~\ref{L:15}.
\end{proof}

\section{Proof of Theorem~\ref{T:1}} \label{S:2}

The proof of Theorem~\ref{T:1} follows the approach used to show \cite[Theorem~3]{BCP2}  and is necessarily more technical as the latter only establishes singly generated algebras.  
We first provide a sufficient condition for the existence of finitely generated hypercyclic algebras that complements  \cite[Remark~8.28]{bayart_matheron2009dynamics} by Bayart and Matheron.
\begin{lemma} \label{L:2} Let $X$ be a separable commutative $F$-algebra, let $T\in L(X)$,
and let $N\ge 2$ be fixed.
Suppose that for each non-empty finite subset $A$ of $\N_0^N$ not containing the zero $N$-tuple there exists $\beta\in A$ satisfying:  

\begin{quote}
$(\ast)$ ``For each non-empty open subsets $U_1,\dots, U_N, V$ and $W$ of $X$ with $0\in W$ there exist
 $f\in U_1\times\dots\times U_N$ and $q\in\N$ so that $T^q(f^\beta)\in V$ and $
T^q(f^\alpha)\in W$  for each $\alpha\in A\setminus \{ \beta \}$''. 
\end{quote}
Then the set of $f=(f_1,\dots ,f_N)$ in $X^N$ that generate a hypercyclic algebra for $T$ is residual in $X^N$.
\end{lemma}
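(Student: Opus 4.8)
The plan is to realize the set of $f\in X^N$ generating a hypercyclic algebra for $T$ as a countable intersection of open dense sets, using a Baire category argument in the separable $F$-algebra $X^N$. The key observation is that $f=(f_1,\dots,f_N)$ generates a hypercyclic algebra precisely when, for every nonzero $P\in\mathbb{K}[t_1,\dots,t_N]$ with $P(0)=0$, the element $P(f)$ is hypercyclic for $T$, i.e. its orbit meets every basic open set. So first I would fix a countable base $\{V_k\}_{k\in\N}$ for $X$, and observe that it suffices to handle a countable dense family of polynomials: reduce to polynomials whose coefficients lie in a fixed countable dense subfield of $\mathbb{K}$ (e.g. $\mathbb{Q}$ or $\mathbb{Q}(i)$), and enumerate these as $(P_\ell)_{\ell\in\N}$. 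Then the target set is
\[
G=\bigcap_{\ell\in\N}\ \bigcap_{k\in\N}\ \{\,f\in X^N:\ \exists\, q\in\N \text{ with } T^q\bigl(P_\ell(f)\bigr)\in V_k\,\}.
\]
Each inner set $G_{\ell,k}$ is open, since $f\mapsto T^q(P_\ell(f))$ is continuous and the condition is an "$\exists q$" union of preimages of the open set $V_k$. One has to check that membership of every $P_\ell(f)$ in $HC(T)$ for the countable dense family $(P_\ell)$ actually forces \emph{every} $P(f)$ with $P(0)=0$, $P\neq 0$, to be hypercyclic; this is a routine approximation argument once we note that multiplication and $T$ are continuous, so small perturbations of the coefficients of $P$ move $P(f)$ and its orbit continuously.

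The heart of the matter is density of each $G_{\ell,k}$, and this is exactly where hypothesis $(\ast)$ enters. Fix $\ell,k$ and a basic open box $U_1\times\cdots\times U_N\subset X^N$; write $P_\ell=\sum_{\alpha\in A}c_\alpha t^\alpha$ with $A\subset\N_0^N$ finite, nonempty, not containing the zero tuple (we may assume $P_\ell(0)=0$). Apply the hypothesis to this set $A$: there is a distinguished multi-index $\beta\in A$ such that, taking $V:=\frac{1}{c_\beta}V_k$ (shrunk, translated appropriately — more precisely, choosing $V$ so that $c_\beta v + w \in V_k$ for all $v\in V$ and all $w$ in a suitable small neighborhood $W$ of $0$) and $W$ a small enough neighborhood of $0$, there exist $f\in U_1\times\cdots\times U_N$ and $q\in\N$ with $T^q(f^\beta)\in V$ and $T^q(f^\alpha)\in W$ for every $\alpha\in A\setminus\{\beta\}$. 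Then by linearity of $T^q$,
\[
T^q\bigl(P_\ell(f)\bigr)=c_\beta\,T^q(f^\beta)+\sum_{\alpha\in A\setminus\{\beta\}}c_\alpha\,T^q(f^\alpha)\in V_k
\]
provided $W$ was chosen with respect to the finitely many coefficients $c_\alpha$ and the (uniform) continuity of addition on the relevant compact-free neighborhood — concretely, shrink $W$ so that $c_\beta V + \sum_{\alpha\neq\beta} c_\alpha W \subset V_k$. Hence $f\in G_{\ell,k}$, and since $f$ lies in the prescribed box, $G_{\ell,k}$ is dense.

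The main obstacle, and the only delicate bookkeeping, is the bootstrap in the previous paragraph: translating the clean conclusion "$T^q(P_\ell(f))\in V_k$" into the precise choice of the open sets $V$ and $W$ to feed into $(\ast)$. One must handle the additive constant of $V_k$ (it need not contain $0$, so $V$ is chosen as a small ball whose $c_\beta$-dilate lands in the interior of $V_k$ after absorbing the error terms), and one must be careful that $(\ast)$ is quantified over \emph{all} choices of $U_1,\dots,U_N,V,W$, so we are free to make $V$ and $W$ as small as the finitely many coefficients of $P_\ell$ demand. Once $G$ is shown to be a dense $G_\delta$, the Baire category theorem gives that $G$ is residual; and every $f\in G$ has the property that $P(f)\in HC(T)$ for all admissible $P$, i.e. $f$ generates a hypercyclic algebra. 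I would close by remarking that this $G$ is contained in the set of such $f$, so the latter is residual as claimed. (Note: this lemma produces finitely generated hypercyclic algebras; the non-finitely-generated strengthening in Theorem~\ref{T:1} comes later by intersecting with the residual sets from Proposition~\ref{P:00}.)
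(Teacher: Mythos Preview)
Your density argument for each $G_{\ell,k}$ is correct, and the overall Baire-category framing is right. The genuine gap is in the step you flag as ``routine'': you claim that if $P_\ell(f)\in HC(T)$ for every $P_\ell$ in a countable dense family of polynomials, then $P(f)\in HC(T)$ for \emph{every} admissible $P$. This fails. The set $HC(T)$ is not closed (it is typically a meager-complement $G_\delta$ with empty interior), so a limit of hypercyclic vectors need not be hypercyclic. Concretely: if $P_\ell\to P$ coefficient-wise then $P_\ell(f)\to P(f)$, but the $q$ witnessing $T^q(P_\ell(f))\in V_k$ depends on $\ell$ and may run off to infinity, so you have no control over $T^{q}\bigl((P-P_\ell)(f)\bigr)$. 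Continuity of $T$ and of multiplication does nothing for you here, because the iterate index is not fixed.

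The paper sidesteps this by \emph{not} parameterizing over polynomials at all. Instead it parameterizes over the countable family $\Lambda_N$ of finite support sets $A\subset\N_0^N\setminus\{0\}$, together with a countable base $\{V_k\}$ and a countable \emph{balanced} local base $\{W_\ell\}$ at $0$. For each $(k,\ell,A,\beta)$ with $\beta\in\mathcal{R}_A$ it defines
\[
\mathcal{A}(k,\ell,A,\beta)=\{\,f\in X^N:\ \exists\,q\ \text{with}\ T^q(f^\beta)\in V_k\ \text{and}\ T^q(f^\alpha)\in W_\ell\ \forall\,\alpha\in A\setminus\{\beta\}\,\},
\]
which is open and, by $(\ast)$, dense. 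The residual set is $G=\bigcap_{k,\ell,A,\beta}\mathcal{A}(k,\ell,A,\beta)$. The point is that this set is defined purely in terms of the \emph{monomials} $f^\alpha$, with no reference to coefficients. Now given an arbitrary $P=\sum_{\alpha\in A}c_\alpha t^\alpha$ and an open $V$, one first picks $(k,\ell)$ so that $c_\beta V_k+\sum_{\alpha\ne\beta}c_\alpha W_\ell\subset V$ (here balancedness of $W_\ell$ absorbs the arbitrary scalars $c_\alpha$), and then uses $f\in\mathcal{A}(k,\ell,A,\beta)$ to land $T^q(P(f))$ in $V$. So the uncountably many coefficient vectors are handled \emph{after} the Baire intersection, not before. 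Your approach tries to bake the coefficients in before intersecting, which forces the illegitimate approximation step. Reorganizing along the paper's lines --- index by supports, not polynomials --- closes the gap.
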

\begin{proof}  Let $\{ V_k \}_{k\in\N }$ be a countable base for the topology of $X$, and let  $\{ W_k \}_{k\in\N }$ be a countable balanced local base. Also, let $\Lambda_N$ denote the set of all finite and non-empty subsets of $\N_0^N$ that do not contain the zero $N$-tuple. For each $A\in \Lambda_N$, let $\mathcal{R}_A$ denote the (non-empty, by our assumption) 
subset of elements $\beta$ of $A$ that satisfy $(\ast)$. 
Now, for each $(k,\ell )\in \N\times\N$ and each $A\in \Lambda_N$ and $\beta\in \mathcal{R}_A$ consider the set $\mathcal{A}(k,\ell, A,\beta)$ of $f\in X^N$ for which there exists some positive integer $q$ satisfying
\[
\begin{aligned}
T^q(f^\beta)&\in V_k \\
T^q(f^\alpha)&\in W_\ell \ \ \ (\alpha\in A\setminus\{ \beta \}).
\end{aligned}
\]
Each $\mathcal{A}(k,\ell ,A,\beta)$ is clearly open, and it is dense by our assumption $(\ast)$. Baire's Category theorem gives that the countable intersection
\[
G:=\underset{k,\ell \in\N}{\cap} \ \underset{A\in \Lambda_N}{\cap} \ \underset{\beta\in \mathcal{R}_A}{\cap} \ \mathcal{A}(k,\ell ,A,\beta)
\]
is residual in $X^N$. But each $f\in G$ induces a hypercyclic algebra for $T$. To see this,  let $0\ne P\in \mathbb{K}[z_1,\dots ,z_N]$ with $P(0)=0$. We want to show that $P(f)$ is hypercyclic for $T$. Without loss of generality, 
\[
P(z)=\sum_{\alpha\in  A} c_\alpha z^\alpha
\]
for some $A\in \Lambda_N$ so that $c_\alpha\ne 0$ for each $\alpha\in A$. Let $V$ be an arbitrary non-empty open subset of $X$. 
Let $\beta\in \mathcal{R}_A$, and let $(k,\ell)\in \N\times\N$ so that

\begin{equation} \label{eq:lego}
c_\beta V_k + \sum_{\alpha\in A\setminus\{ \beta \} } c_\alpha W_\ell \subset V.
\end{equation}
Then since $f$ belongs to $\mathcal{A}(k,\ell ,A,\beta)$ there exists $q\in\mathbb{N}$ so that $T^qf\in V_k$ and $T^qf^\alpha \in W_\ell$ for each $\alpha\in A\setminus \{\beta \}$. Hence by $\eqref{eq:lego}$  
\[
T^qP(f)= c_\beta T^qf^\beta    +    \sum_{\alpha\in A\setminus\{ \beta \} } c_\alpha T^qf^\alpha \in V.                                                                                           
\]
\end{proof}
We next establish in Lemma~\ref{L:7-} below some consequences of the geometric assumption in Theorem~\ref{T:1}.
Recall that for  a planar smooth curve $\mathcal{C}$ with parametrization $\gamma : [0,1]\to \C$, $\gamma (t)=x(t)+i y(t)$, its signed curvature at a point $P=\gamma (t_0)\in \mathcal{C}$ is given by
\[
\kappa (P):= \frac{  x'(t_0) y''(t_0) - y'(t_0) x''(t_0) }{ |\gamma'(t_0)|^3 }.
\]
and its unsigned curvature at $P$ is given by $|\kappa (P)|$.
It is well-known that $|\kappa (P)|$ does not depend on the parametrization chosen, and that the signed curvature $\kappa (P)$ depends only on the choice of orientation seleted for $\mathcal{C}$.  Also, in the particular case when $\mathcal{C}$ is given by the graph of a function $y=f(x)$, $a\le x\le b$, (and oriented from left to right), its signed curvature at a point $P=(x_0, f(x_0))$ is given by
\[
\kappa (P)= \frac{ y''(x_0)}{(1+(y'(x_0))^2 )^{\frac{3}{2}}}.
\]
In particular, $\kappa < 0$ on $\mathcal{C}$ if and only if $y=f(x)$ is concave down (i.e.,  $ (1-s) f(a_1) + s f(b_1)<  f((1-s)a_1 + s b_1) $     for  any $s\in (0, 1)$ and any subinterval $[a_1, b_1]$ of $[a,b]$).

\begin{lemma} \label{L:7-}
Let $\Phi\in H(\C )$ be 
 of finite exponential  type with $|\Phi (0)|<1$ and so that the level set $\{ z: \ |\Phi(z)|=1 \}$ contains a non-trivial, strictly convex compact arc $\Gamma_1$ satisfying
\begin{equation} \label{eq:L7-.1}
\mbox{conv}(\Gamma_1\cup \{ 0 \} ) \setminus  \Gamma_1 \subseteq \Phi^{-1} (\mathbb{D}).
\end{equation}
Then for any integers $d, M$ with $1\le M\le d$ 
there exist a non-trivial compact segment $\Lambda \subset \Phi^{-1}(\mathbb{D})\setminus\{ 0\}$      and  $r_0>1$ so that for each $1<r\le r_0$ there exists a non-trivial strictly convex compact arc $\Gamma_r\subset \Phi^{-1}(r\partial \mathbb{D})$ with
\[
\mbox{conv}(\Gamma_r\cup\{ 0 \})\setminus \Gamma_r \subset \Phi^{-1}(r\mathbb{D})
\]
and satisfying
\begin{equation} 
\begin{aligned}
&(i)\ \  \ \hspace{.3in}  \sum_{i=1}^{d} \Lambda \subset \Phi^{-1}(\mathbb{D}), \\
&(ii)\ \  \hspace{.3in}     \mbox{conv}(\Gamma_r \cup \{ 0 \})      +   \sum_{s=1}^i \Lambda  \subset \Phi^{-1}(\mathbb{D}) \ \ \ (1\le i < d), \mbox{ and }\\
&(iii) \ \  \hspace{.3in}  \sum_{s=1}^i   \frac{1}{M} \Gamma_r \subset \Phi^{-1}(\mathbb{D})   \ \ \ \ (1\le i < M).
\end{aligned}
\end{equation}

\end{lemma}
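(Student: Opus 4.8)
The plan is to extract from the hypothesis a well-behaved piece of $\Gamma_1$ and a segment near $0$, then shrink everything by a scaling/continuity argument so that the finitely many Minkowski-sum conditions $(i)$–$(iii)$ all land inside the open set $\Phi^{-1}(\mathbb D)$. First I would reparametrize: since $\Gamma_1$ is a non-trivial strictly convex compact arc with $|\Phi|=1$ on it and $\mathrm{conv}(\Gamma_1\cup\{0\})\setminus\Gamma_1\subset\Phi^{-1}(\mathbb D)$, I may pass to a small sub-arc of $\Gamma_1$ on which $\Phi'$ does not vanish (it can vanish only at finitely many points of the compact arc, since $\Phi\not\equiv$ const and $\Phi$ has finite exponential type hence is not identically zero) and on which the arc is a graph $y=g(x)$ over some axis direction with $g$ strictly concave (or convex) — this is what ``strictly convex arc'' gives via the curvature discussion just recalled, with nonvanishing curvature after possibly shrinking again. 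Call this sub-arc still $\Gamma_1$ by abuse; it still satisfies \eqref{eq:L7-.1} because shrinking an arc only shrinks its convex hull with $0$.

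Next I would produce the family $\Gamma_r$. The idea is that the level sets $\{|\Phi|=r\}$ depend continuously (in fact real-analytically, via the implicit function theorem at points where $\Phi'\neq0$) on $r$ near $1$, so for $r$ slightly above $1$ there is a nearby arc $\Gamma_r\subset\Phi^{-1}(r\partial\mathbb D)$ close to $\Gamma_1$ in, say, the Hausdorff metric and in $C^2$, hence still strictly convex with nonvanishing curvature; and since the open condition $\mathrm{conv}(\Gamma_1\cup\{0\})\setminus\Gamma_1\subset\Phi^{-1}(\mathbb D)$ is stable under small perturbations of the arc and under slightly enlarging the target to $r\mathbb D$, we get $\mathrm{conv}(\Gamma_r\cup\{0\})\setminus\Gamma_r\subset\Phi^{-1}(r\mathbb D)$ for $1<r\le r_0$ for some $r_0>1$. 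I should be a bit careful to formulate the implicit-function step near an endpoint of the arc, but working with a slightly larger sub-arc of the original $\Gamma_1$ and then trimming gives room for that.

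For $(i)$–$(iii)$ I would exploit that $\Phi^{-1}(\mathbb D)$ is open and contains the compact ``triangle'' $K_0:=\mathrm{conv}(\Gamma_1\cup\{0\})$. Pick a point $p$ in the relative interior of the segment from $0$ to an interior point of $\Gamma_1$ — so $p\in\Phi^{-1}(\mathbb D)\setminus\{0\}$ by \eqref{eq:L7-.1} — and let $\Lambda:=[\,(1-\delta)p,(1+\delta)p\,]$ be a tiny segment around $p$, still inside $\Phi^{-1}(\mathbb D)\setminus\{0\}$. Because $0\in K_0$, the sum $\sum_{s=1}^i\Lambda$ and $K_0+\sum_{s=1}^i\Lambda$ are contained in dilates of $K_0$ by a factor that can be made as close to $1$ as we like by taking $p$ close to $0$ and $\delta$ small; more precisely, choosing $p$ with $|p|$ small enough that $d\cdot(1+\delta)p$ together with $K_0 + d\cdot(1+\delta)\,[\,-|p|,|p|\,]\cdot(p/|p|)$ stays in the open set $\Phi^{-1}(\mathbb D)$ — possible by compactness of $K_0$ and openness of $\Phi^{-1}(\mathbb D)$, since at $p=0$ these sets degenerate to $K_0\subset\Phi^{-1}(\mathbb D)$ — yields $(i)$ and the $r=1$ case of $(ii)$. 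Similarly $\frac1M\Gamma_1\subset\frac1M K_0$ and $K_0$ is star-shaped about $0$, so $\sum_{s=1}^i\frac1M\Gamma_1\subset\frac{i}{M}K_0\subset K_0\subset\Phi^{-1}(\mathbb D)$ for $i<M$, but I actually need it for $\Gamma_r$, so again I invoke continuity in $r$: all the finitely many conditions $(i)$, $(ii)$, $(iii)$ are of the form ``a certain compact set, depending continuously on $r$ and equal at $r=1$ to a compact subset of the open set $\Phi^{-1}(\mathbb D)$, lies in $\Phi^{-1}(\mathbb D)$'', so they persist for $r\le r_0$ after possibly shrinking $r_0$; and we may further shrink $\Lambda$ (equivalently $|p|$, $\delta$) uniformly so the argument works for every $r\in(1,r_0]$ simultaneously, which is fine since there are only finitely many inequalities indexed by $i<d$ and $i<M$.

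The main obstacle I expect is not the combinatorics of the Minkowski sums — those are handled uniformly by compactness and the star-shapedness of $K_0$ about $0$ — but the construction of the perturbed arcs $\Gamma_r$ with all the required qualitative properties (non-trivial, compact, strictly convex, nonvanishing curvature, correct level set, and the convex-hull condition relative to $r\mathbb D$) done cleanly near the endpoints of the arc; the cleanest route is to first replace $\Gamma_1$ by a compact sub-arc of a slightly larger strictly convex arc in $\Phi^{-1}(\partial\mathbb D)$ on which $\Phi'\neq 0$, apply the implicit function theorem to $|\Phi|^2=r^2$ to get a $C^2$ (indeed real-analytic) family of arcs over a fixed parameter interval, trim to a compact sub-arc for $\Gamma_r$, and then verify strict convexity and the convex-hull inclusion are preserved for $r$ close to $1$ by the openness and $C^2$-stability already used. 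I would present this last verification via uniform continuity of $(z,r)\mapsto|\Phi(z)|/r$ on a fixed compact neighborhood of $K_0$, reducing each required inclusion to a strict inequality that holds at $r=1$ and hence on a neighborhood.
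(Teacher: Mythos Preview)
Your implicit–function construction of the nearby arcs $\Gamma_r$ and the ``everything is an open condition that persists for $r$ close to $1$'' philosophy are both fine and match what the paper does. The genuine gap is in your treatment of condition $(ii)$, and it stems from the false assertion that the compact set $K_0:=\mathrm{conv}(\Gamma_1\cup\{0\})$ lies in $\Phi^{-1}(\mathbb D)$. It does not: $\Gamma_1\subset K_0$ and $|\Phi|\equiv 1$ on $\Gamma_1$, so $K_0\cap\partial\Phi^{-1}(\mathbb D)\supset\Gamma_1$. Hence your limiting argument ``at $p=0$ these sets degenerate to $K_0\subset\Phi^{-1}(\mathbb D)$'' collapses, and the compactness/openness step you invoke for $(ii)$ has no true base case.

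Worse, your choice of $\Lambda$ points in the wrong direction. You take $p$ on the segment from $0$ toward $\Gamma_1$, so adding elements of $\Lambda$ to points of $\Gamma_r$ pushes them \emph{away} from $0$, i.e.\ across $\Gamma_1$ to the side where you have no control over $|\Phi|$. Condition $(ii)$ asks that $\Gamma_r+\lambda\subset\Phi^{-1}(\mathbb D)$ even though $|\Phi|=r>1$ on $\Gamma_r$; the translation by $\lambda$ must therefore actively pull $\Gamma_r$ back into the sub-level set, not push it out. The paper handles this by first using the hypothesis $|\Phi(0)|<1$ (which you never invoke) to obtain a small disc $D(0,\epsilon)\subset\Phi^{-1}(\mathbb D)$, then enlarging the convex region to $\mathrm{conv}(\Gamma_1\cup\{\tilde z\})$ for some $\tilde z$ on the \emph{opposite} side of $0$ from $\Gamma_1$, and finally placing $\Lambda$ inside a segment $(z_3^*,0)$ with $z_3^*$ a negative multiple of a point of $\Gamma_1$. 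With that choice, $\Gamma_r+\lambda$ lands strictly between $\Gamma_1$ and $z_3^*$, where $|\Phi|<1$ is known. Your argument for $(iii)$ is essentially correct once one notes $\sum_{s=1}^{i}\tfrac{1}{M}\Gamma_1\subset\tfrac{i}{M}\,\mathrm{conv}(\Gamma_1)\subset K_0\setminus\Gamma_1$ for $i<M$, but $(ii)$ needs the extra geometric input above.
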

\begin{proof} For each $0\ne z\in\mathbb{C}$ we denote by $\mbox{arg}(z)$ the argument of $z$ that belongs to $[0, 2\pi)$.
Since $\Gamma_1$ is strictly convex, replacing it by a subarc  and $\Phi(z)$ by $\tilde{\Phi}(z)=\Phi(az)$ for some $a\in\partial\mathbb{D}$ if necessary (see \cite[Remark~6]{BCP2} ) we may assume that $\Gamma_1$ is simple and with endpoints $z_1$, $z_2$ satisfying
\begin{equation} \label{eq:endpoints}
\begin{aligned}
0<\mbox{arg}(z_1) & < \mbox{arg}(z_2) < \pi \\
\mbox{Re}(z_2) &< \mbox{Re}(z_1) 
\end{aligned}
\end{equation}
and so that $0\notin \mbox{conv}(\Gamma_1)$.
Thus (cf \cite[Proposition~7]{BCP2}) 
\[
\Omega:=\mbox{conv}(\Gamma_1\cup \{ 0 \})\setminus (\Gamma_1 \cup \{ 0\} ) = \{ tz:\ (t,z)\in (0,1)\times \mbox{conv}(\Gamma_1) \} 
\]
is contained in the sector $\{ 0\ne w\in\mathbb{C}:  \ \mbox{arg}(z_1)\le \mbox{arg}(w)\le \mbox{arg}(z_2) \}$ and
 $\partial\Omega=[0,z_1)\cup\Gamma_1\cup(0,z_2)$, and we may assume that $\Gamma_1$ is the graph of a concave down function $f:[\mbox{Re}(z_2), \mbox{Re}(z_1)]\to (0,\infty)$ (replacing $z_j$ by $z_j'=\mbox{Re}(z_j)+if(\mbox{Re}(z_j))\in \Gamma_1$, $j=1,2$ if necessary).
Now, let $\epsilon >0$ so that $D(0,\epsilon)\subset \Phi^{-1}(\mathbb{D})$ and pick  $0\ne z\in D(0,\epsilon)\cap \mbox{conv}(\Gamma_1\cup\{ 0\})$ close enough to zero so that its additive inverse $\tilde{z}=-z$ satisfies, replacing $\Gamma_1$ by a subarc with endpoints satisfying $\eqref{eq:endpoints}$ as well as $z_1$ and $z_2$ by the endpoints of this subarc and restricting the function $f$ to the corresponding subinterval $[\mbox{Re}(z_2),\mbox{Re}(z_1)]$, that
\[
\mbox{conv}(\Gamma_1\cup \{ \tilde{z} \})\setminus\Gamma_1 \subset \Phi^{-1}(\mathbb{D})
\]
and $0\in\mbox{int}(\mbox{conv}(\Gamma_1\cup\{ \tilde{z} \}))$. In particular, letting  $0<\epsilon_1$ so that $D(0,\epsilon_1)\subset \mbox{conv}(\Gamma_1\cup \{ \tilde{z} \})\setminus\Gamma_1$,
for each $z^*\in D(0,\epsilon_1)$ we have
\begin{equation} \label{eq:2and3/4}
\mbox{conv}(\Gamma_1\cup\{ z^*\})\setminus \Gamma_1 \subset \Phi^{-1}(\mathbb{D}).
\end{equation}
Now, pick $z_0\in \Gamma_1\setminus \{ z_1, z_2 \}$ with $\Phi' (z_0)\ne 0$, and let $w_0:=\Phi(z_0)=e^{i\theta_0}$, where $\theta_0\in [0, 2\pi)$. Choose $\rho>0$ small enough so that the only solution to
\[
\Phi(z)=w_0
\]
in $D(z_0, \rho)$ is at $z=z_0$, and so that $D(z_0, \rho )\cap ([0, z_1] \cup [0, z_2])=\emptyset$.  Next, pick
\[
0<s<\mbox{min}\{ |\Phi(z)-w_0|: \ |z-z_0|=\rho \}
\]
and let $0<\delta < \mbox{min}\{ 1, s\}$ so that the polar rectangle
\[
R_\delta := \{ z=r e^{i\theta }: \ (r,\theta)\in [1-\delta, 1+\delta]\times [\theta_0-\delta, \theta_0+\delta] \}
\]
is contained in $D(w_0, s)$. Then 
\[
g:R_\delta \to D(z_0, \rho), \ g(w)= \frac{1}{2\pi i} \underset{|z-z_0|=\rho}{\int} \frac{z \Phi'(z)}{\Phi (z)-w } dz
\]
defines a univalent holomorphic function satisfying that
\begin{equation} \label{eq:2}
\Phi \circ g = \mbox{identity on $R_\delta$,}
\end{equation}
see e.\ g.\ \cite[p.\ 283]{gamelin}.
So $W:= g(R_\delta)$ is a connected compact neighborhood of $z_0$, and $\Phi$ maps $W$ biholomorphically onto $R_\delta$. Hence for each $1-\delta \le r \le 1+\delta $
\[
\eta_r := g(R_\delta \cap r\partial \D)
\]
is a smooth arc contained in $W\cap \Phi^{-1}(r\partial\D)$. In particular, $\eta_1=W\cap\Gamma_1$ is a strictly convex subarc of $\Gamma_1$. 
Next, notice that since 
\[
W\cap \Omega \ \ \mbox{ and } \ \ W\cap \mbox{Ext}(\Omega ) 
\]
are the two connected components of $g(R_\delta\setminus \partial\D )= W\setminus \eta_1$
and $\Omega \subseteq \Phi^{-1}(\D)$, by $\eqref{eq:2}$  the homeomorphism $g:R_\delta\setminus \partial\D \to W\setminus \eta_1   $ must satisfy
\[
\begin{aligned}
g(R_\delta \cap \mbox{Ext}(\D)) &= W\cap \mbox{Ext}(\Omega) \\
g(R_\delta \cap \D)&= W\cap \Omega.
\end{aligned}
\]
Hence 
\[
W\cap \overline{\mbox{Ext}(\Omega)} = \underset{1\le r \le 1+\delta}{\cup} \eta_r
\]
and $g$ induces a smooth homotopy among the curves $\{ \eta_r \}_{1\le r\le 1+\delta}$. Namely,
each $\eta_r$ $(1\le r\le 1-\delta)$ has the Cartesian parametrization 
\[
\eta_r: \ \begin{cases}   X(r,t) \\
Y(r,t)
\end{cases}  
\ \ \ \theta_0-\delta \le t \le \theta_0 +\delta,
\]
where $X, Y:[1-\delta, 1+\delta]\times [\theta_0-\delta, \theta_0+\delta]\to \R$ are given by
\[
\begin{aligned}
X(r,t)&:= \mbox{Re}(g)(r e^{it}) \\
Y(r,t)&:= \mbox{Im}(g)(r e^{it}).
\end{aligned}
\]
Now, for each $P=g(re^{i\theta})$ in $W$ the (signed) curvature $\kappa^{\eta_r}(P)$ of $\eta_r$ at $P$  is given by 
\[
\kappa^{\eta_r}(P) = \frac{        \frac{   \partial X }{\partial t      } (r,\theta)  \frac{   \partial^2 Y }{\partial^2 t      } (r,\theta) - \frac{   \partial Y }{\partial t      } (r,\theta)  \frac{   \partial^2 X }{\partial^2 t      } (r,\theta)  }{     \left( ( \frac{\partial X}{\partial t}(r,\theta))^2 +    ( \frac{\partial Y}{\partial t}(r, \theta))^2 \right)^{\frac{3}{2}} },
\]
Hence the map $K : W\to \R$, $K (g(re^{it})) := \kappa^{\eta_r}(P)$, is continuous.  Now, since $\eta_1$ is strictly convex there exists some $P=g(e^{i\theta_1})$ in $\eta_1$ for which each of $\kappa^{\eta_1}(P)$, $\frac{\partial X}{\partial t}(1,\theta_1)$ is non-zero. Hence by the continuity of $K$ and of $\frac{\partial X}{\partial t}$ we may find some $0<\delta'<\delta$ so that the polar rectangle 
\[
R_{\delta'} := \{ z=r e^{i\theta }: \ (r,\theta)\in [1-\delta', 1+\delta']\times [\theta_1-\delta', \theta_1+\delta'] \}
\]
is contained in the interior of $R_\delta$ and so that $K$  and $\frac{\partial X}{\partial t}$  are bounded away from zero on $g(R_{\delta'})$ and on $R_{\delta'}$, respectively. 

In particular, either  $\frac{\partial X}{\partial t}>0$ \ or  $\frac{\partial X}{\partial t}<0$ on $R_{\delta'}$, and 
either $K>0$ or $K<0$ on $g(R_{\delta'})$. So each $\eta_r\cap g(R_{\delta'})$ $(1\le r < 1+\delta')$ is the graph of a smooth function 
\[
f_r:(a_r, b_r)\to (0, \infty),
\]
with \[
(a_r, b_r)=\begin{cases} (X(r, \theta_1-\delta'), X(r, \theta_1+\delta')) &\mbox{ if $\frac{\partial X}{\partial t}>0$ on $R_{\delta'}$} \\
(X(r, \theta_1+\delta'), X(r, \theta_1-\delta')) &\mbox{ if $\frac{\partial X}{\partial t}<0$ on $R_{\delta'}$.} 
\end{cases}
\]

Notice that $g(re^{it})\underset{r\to 1}{\to} g(e^{it})$ uniformly on $t\in [\theta_1-\delta, \theta_1+\delta]$, so \[(a_r, b_r)\underset{r\to1}{\to}(a_1, b_1)\] and fixing a non-trivial compact subinterval $[a,b]$ of $(a_1, b_1)$  there exists $1<r_0$ so that
\[
[a,b]\subset \cap_{1\le r\le r_0} (a_r, b_r).
\]
Thus for each $1\le r\le r_0$ we have that 
\[
\eta_r'=\{ (x, f_r(x)); \ x\in [a,b] \}
\]
is a subarc of $\eta_r$. Moreover, since $f_1=f$ on $[a, b]$ it must be a concave down function, and so we must have $K<0$ on $g(R_{\delta'})$. Hence each  $f_r$ with $1\le r\le 1+\delta'$ is also concave down. and for any $1<r<r_0$ close enough to $1$ the arc $\Gamma:=\eta_r'$
satisfies
\begin{equation} \label{eq:3.7a}
\mbox{conv}(\Gamma\cup\{ 0\})\setminus (\Gamma\cup\{ 0\}) \subset \Phi^{-1}(r\D )
\end{equation}
and
\begin{equation}\label{eq:3.7b}
\sum_{s=1}^i \frac{1}{M}\Gamma \subset \Omega \ \ \mbox{for $1\le i< M$.}
\end{equation}
Further reducing $\Gamma_1$ if necessary, we may assume that $\Gamma_1=\eta_1'$ (so $z_2=a+if(a)$ and $z_1=b+if(b)$). 
Now, let $z_3:=\frac{a+b}{2}+i f(\frac{a+b}{2})\in \Gamma_1$ and let $z_3^*$ be a point in 
\[
D(0,\epsilon_1)\cap \{ tz_3: \ t<0\}.
\]
By $\eqref{eq:2and3/4}$, we know that
\[
\mbox{conv}(\Gamma_1\cup\{ z_3^*\})\setminus \Gamma_1 \subset \Phi^{-1}(\mathbb{D}).
\]
Also, let $\Lambda \subset (z_3^*, 0)$ be a compact segment small enough and close enough to $0$ so that
\[
\sum_{s=1}^{d} \Lambda \subset (z_3^*, 0).
\]
Let $\lambda_0\in\Lambda$ so that $|\lambda_0|=\mbox{dist}(0,\Lambda)$, and let $\lambda_{-1}\in (z_3^*,0)$ so that 
\[
\sum_{s=1}^i \Lambda \subset [\lambda_{-1}, \lambda_0] \ \ \ (1\le i<d).
\]
For $j=1,2$, let $w_j$ denote the point at which the arc  $\Gamma_1+\lambda_0+\lambda_{-1}$ meets $[z_3^*, z_j]$. Also, let $\gamma_{-1}$ denote the subarc of $\Gamma_1+\lambda_0$ with endpoints 
\[
A_{j,-1}:= w_j-\lambda_{-1} \ \ (j=1,2)
\]
and let $\gamma_0:=\gamma_{-1}-2\lambda_0$ denote the subarc of $\Gamma_1-\lambda_0$ with endpoints
\[
A_{j,0}:=A_{j-1}-2\lambda_0 \ \ (j=1,2).
\]
So if $V$ denotes the bounded open region with boundary
\[
\partial V=\gamma_{-1} \cup [A_{1,-1}, A_{1,0}]\cup \gamma_{0} \cup [A_{2,0}, A_{2,-1}]
\]
then $z_3\in V$ and
\[
 V+\sum_{s=1}^i \Lambda \subset \mbox{conv}(\Gamma_1\cup\{ z_3^*\})\setminus \Gamma_1 \subset \Phi^{-1}(\mathbb{D}) \ \  \ (1\le i< d).
\]
Let $Q$ be a non-trivial closed rectangle centered at $z_3$ with base parallel to the $x$-axis and
\[
Q\subset V\cap ( (a,b)\times (0,\infty)).
\]
Next, choose $1< r\le r_0$ close enough to $1$ so that $\eqref{eq:3.7a}$ and $\eqref{eq:3.7b}$ hold and so that the subarc $\eta_r'$ of $\eta_r$ intersects $Q$. Then 
\[
\Gamma_r := \{ (x, f_r(x)): \ x\in[a,b] \} \cap Q  \}
\]
is a non-trivial strictly convex compact arc in $\Phi^{-1}(r\partial\mathbb{D})$ satisfying
\[
\mbox{conv}(\Gamma_r \cup \{ 0 \}) \setminus \Gamma_r \subset \Phi^{-1} ( r \mathbb{D} ),
\]
as well as
\[
\sum_{s=1}^i \frac{1}{M}\Gamma_r \subset \Omega\subset\Phi^{-1}(\mathbb{D})     \ \ \ (1\le i< M)
\]
and 
\[
\mbox{conv}(\Gamma_r\cup \{0 \}) + \sum_{s=1}^i \Lambda \subset \Phi^{-1}(\mathbb{D}) \ \ \ (1\le i< d).
\]
\end{proof}

Finally, we note the following fact about finite sets of $N$-dimensional multi-indices. We thank an anonymous referee for providing the proof.

\begin{lemma} \label{L:1}
Let  $A$ be a finite non-empty subset of $\mathbb{N}_0^N$, where $N\ge 2$.
Then there exist positive scalars $k_i$ 
 $(i=1,\dots, N)$   so that the functional
\[
(x_i)_{i=1 }^N
\mapsto \sum_{i=1}^N k_i x_i
\]
is injective on $A$. 
\end{lemma}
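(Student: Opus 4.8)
The plan is to find a single linear functional with positive coefficients that separates the finitely many points of $A$. Since $A \subset \mathbb{N}_0^N$ is finite, there are only finitely many ``bad directions'' to avoid, namely those $k=(k_1,\dots,k_N)$ for which $\sum_i k_i \alpha_i = \sum_i k_i \alpha'_i$ for some distinct pair $\alpha,\alpha' \in A$. Each such condition says that $k$ lies on the hyperplane $\{k : \langle k, \alpha-\alpha'\rangle = 0\}$, a proper linear subspace of $\mathbb{R}^N$ because $\alpha - \alpha' \ne 0$. There are at most $\binom{|A|}{2}$ of these hyperplanes, so their union is a closed set with empty interior in $\mathbb{R}^N$.

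First I would form the finite collection $\mathcal{H}$ of hyperplanes $H_{\alpha,\alpha'} = \{k \in \mathbb{R}^N : \langle k, \alpha - \alpha'\rangle = 0\}$ indexed by unordered pairs of distinct elements of $A$. Next, I would observe that the open positive orthant $(0,\infty)^N$ is a nonempty open subset of $\mathbb{R}^N$ and hence cannot be covered by finitely many proper affine subspaces (a Baire-category argument, or more elementarily: a finite union of proper subspaces has measure zero, or empty interior). Therefore there exists $k = (k_1,\dots,k_N) \in (0,\infty)^N$ with $k \notin \bigcup_{H \in \mathcal{H}} H$. For such a $k$, if $\alpha,\alpha' \in A$ are distinct then $\langle k, \alpha - \alpha'\rangle \ne 0$, i.e. $\sum_{i=1}^N k_i \alpha_i \ne \sum_{i=1}^N k_i \alpha'_i$, which is exactly the asserted injectivity of $(x_i)_{i=1}^N \mapsto \sum_i k_i x_i$ on $A$.

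Alternatively, one can make the choice fully explicit and avoid any appeal to category: take $k_i = \theta^{i-1}$ for a suitable real $\theta > 1$, or more robustly choose the $k_i$ inductively so that at each stage $k_i$ avoids the finitely many values that would create a collision among the coordinates seen so far; since finitely many forbidden values cannot exhaust an interval, each choice is possible. Either route works; the category phrasing is shortest to write.

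I do not anticipate a genuine obstacle here: the only thing to be careful about is ensuring the coefficients are \emph{positive} (not merely nonzero or real), which is why I work inside the open orthant $(0,\infty)^N$ rather than all of $\mathbb{R}^N$, and noting that $(0,\infty)^N$ is still too ``fat'' to be swallowed by finitely many hyperplanes. The finiteness of $A$ is essential and is used exactly once, to guarantee that the number of hyperplanes to avoid is finite.
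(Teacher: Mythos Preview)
Your argument is correct and is essentially identical to the paper's: both reduce the problem to avoiding the finitely many hyperplanes $\{k:\langle k,\alpha-\alpha'\rangle=0\}$ inside the open orthant $(0,\infty)^N$, invoking Baire category (equivalently, that a finite union of proper subspaces has empty interior) to find a suitable $k$. The only cosmetic difference is that the paper packages the differences as a set $B=\{\alpha-\alpha':\alpha\ne\alpha'\in A\}$ and speaks of the functionals $\psi_\gamma$ being first category, whereas you phrase it in terms of hyperplanes and also note an explicit alternative choice $k_i=\theta^{i-1}$.
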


\begin{proof}\  
The conclusion is trivial if $A$ is a singleton, so we may assume
the finite set \[ B:=\{ \alpha-\alpha' \in \mathbb{R}^N:  \alpha, \alpha'\in A,  \alpha\ne \alpha' \}\] is non-empty.
It suffices to find  $k\in (0,\infty)^N$ for which the functional
\[
(x_i)_{i=1 }^N
\mapsto \sum_{i=1}^N k_i x_i
\]
is zero-free on $B$. 
 Now, for each $\gamma \in B$  the functional $\psi_\gamma: (0,\infty)^N\to \mathbb{R}$, $\psi_\gamma (k)= \sum_{i=1}^N k_i \gamma_i$ is non-trivial  and thus $\psi_\gamma^{-1}(\{ 0\})$ is of first category in $(0,\infty )^N$. So the finite union
\[
\cup_{\gamma\in B} \ \psi_\gamma^{-1}(\{ 0\})
\]
 is of first category in $(0,\infty)^N$. But for any  $k$ in  $(0,\infty)^N\setminus \cup_{\gamma\in B} \psi_\gamma^{-1}(\{ 0\})$, we have that  $\psi_\gamma (k)\ne 0$ for every $\gamma \in B$.
\end{proof}

We are ready now to show the main result.
\begin{proof}[Proof of Theorem~\ref{T:1}:]  
Let $N\ge 2$ and let $A$ be a non-empty finite subset of $\mathbb{N}_0^N$ not containing the zero $N$-tuple. By Lemma~\ref{L:1} there exists $k\in (0,\infty)^N$ so that the real functional
\[
(x_i)_{i=1 }^N
\mapsto \sum_{i=1}^N k_i x_i
\]
is injective on $A$. Let $\beta\in A$ be the point at which the  functional attains its strict minimum over the set $\{ \alpha\in A: \ \alpha_{i_A}=M_A \}$, 
 where
$M_A:=\mbox{max}\{ |\alpha|_\infty: \alpha\in A\}$ and where
\[
i_A:=\mbox{min} \{ 1\le j \le N: \ \exists \alpha=(\alpha_1,\dots, \alpha_N)\in A \mbox{ with } \alpha_j=M_A \}.
\] 
So we have
 \begin{equation}\label{eq:k}
\sum_{1\le i\le N,  \  i\ne i_A}  k_i (\beta_i-\alpha_i) < 0 \ \  \ \mbox{ for each $\alpha\in A\setminus\{\beta \}$ with $\alpha_{i_A}=M_A$.}
\end{equation}
It suffices to show the following claim: 

\begin{claim} \label{c:2}
 Let  $U_1,\dots, U_N, W, V$ be non-empty open subsets of  $H(\C )$ with $0\in W$. 
Then there exist $f\in U_1\times\cdots\times U_N$ and $q\in\N$ so that
\begin{equation}
\begin{aligned}
\Phi(D)^q(f^\beta)&\in V   \\
\Phi(D)^q(f^\alpha)&\in W \ \ \mbox{ for each $\alpha\in A\setminus \{ \beta \}$.}
\end{aligned}
\end{equation}
\end{claim}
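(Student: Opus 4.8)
The plan is to prove Claim~\ref{c:2} by exhibiting the desired $f=(f_1,\dots,f_N)$ explicitly as a small perturbation of arbitrary target functions $u_j\in U_j$ together with a large ``active'' exponential piece concentrated along the arc $\Gamma_r$ supplied by Lemma~\ref{L:7-}. Concretely, I would fix $d$ larger than $M_A$ and the degrees involved, invoke Lemma~\ref{L:7-} with this $d$ and with $M=M_A$ to obtain the segment $\Lambda\subset\Phi^{-1}(\mathbb D)\setminus\{0\}$, the radius $r_0>1$, and for each $1<r\le r_0$ the strictly convex arc $\Gamma_r\subset\Phi^{-1}(r\partial\mathbb D)$ satisfying $(i)$--$(iii)$. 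I would then look for $f_j$ of the form $f_j = u_j + \varepsilon\, e^{c\lambda z} + (\text{a sum of exponentials } e^{w z} \text{ with } w\in\Gamma_r)$, where $\lambda\in\Lambda$, the coefficient $c$ depends on $j$ (chosen so that the $i_A$-th coordinate is treated differently: the ``$\Gamma_r$'' term appears with multiplicity reflecting $M_A$ only in $f_{i_A}$, while for $j\ne i_A$ the exponent lies in $\frac1{M_A}\Gamma_r$ or in $\Lambda$), and $q$ is a large power. The point is that raising $f^\beta$ to the power $\Phi(D)^q$ amplifies exactly the monomial coming from the product of the top-order exponential terms — whose combined exponent sits on $\Gamma_r$ and hence has $|\Phi|=r>1$, so $\Phi(D)^q$ multiplies it by $\Phi(w)^q\to\infty$ — while every other monomial appearing in $f^\alpha$ has total exponent lying in one of the regions $\mbox{conv}(\Gamma_r\cup\{0\})+\sum_1^i\Lambda$ or $\sum_1^i\frac1{M}\Gamma_r$ guaranteed by $(i)$--$(iii)$ to land inside $\Phi^{-1}(\mathbb D)$, so $\Phi(D)^q$ contracts it to $0$.

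The key steps, in order, are: (1) unwind the multinomial expansion of $f_j^{\alpha_j}$ and of the product $f^\alpha=\prod_j f_j^{\alpha_j}$, organizing the terms by which exponential factor is selected from each $f_j$; (2) use the injectivity of the functional $(x_i)\mapsto\sum k_i x_i$ on $A$ together with \eqref{eq:k} to identify a single ``dominant'' term in $f^\beta$ — the one where every factor contributes its $\Gamma_r$-exponential and the $i_A$-coordinate contributes it $M_A$ times — and to check that in every $f^\alpha$ with $\alpha\ne\beta$, $\alpha_{i_A}=M_A$, the corresponding ``would-be dominant'' exponent is pushed strictly into $\Phi^{-1}(\mathbb D)$ by the sign condition \eqref{eq:k}, while if $\alpha_{i_A}<M_A$ it lands in $\sum_1^i\frac1M\Gamma_r\subset\Phi^{-1}(\mathbb D)$ by $(iii)$; (3) estimate, for a fixed large $q$, the norm of $\Phi(D)^q$ applied to each group of terms: the dominant term, after dividing by $\Phi(w)^q$ and choosing the exponential coefficient to be $\Phi(w)^{-q}$ times (roughly) a prescribed hypercyclic-type target, is made to approximate any given element of $V$ on compact sets (this uses density of exponential sums $e^{wz}$, $|\Phi(w)|$ in a range, in $H(\mathbb C)$, exactly as in \cite{godefroy_shapiro1991operators} and \cite[Theorem~3]{BCP2}); the ``tame'' terms, coming with a fixed $\varepsilon$ and exponents inside $\Phi^{-1}(\mathbb D)$, are $O(r_1^q)$ with $r_1<1$ hence $\to0$; and the ``mixed'' cross terms are handled the same way since their exponents also lie in the regions of $(i)$--$(iii)$; (4) choose $\varepsilon$ small and $q$ large, in that order, to land $f_j\in U_j$, $\Phi(D)^q f^\beta\in V$, and $\Phi(D)^q f^\alpha\in W$.

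The main obstacle I anticipate is the bookkeeping in step (2)--(3): in the multinomial expansion of $f^\alpha$ there are many terms, and I must verify that \emph{every} one of them other than the designated dominant term in $f^\beta$ has total exponent in $\Phi^{-1}(\mathbb D)$, using only the three inclusions $(i)$--$(iii)$. This forces the exponential pieces to be introduced carefully: the $\Lambda$-exponentials absorb the ``extra'' factors $e^{cz}$ that arise when $\alpha_j$ exceeds the ``expected'' exponent, and the bound $d>\deg$ guarantees we never need more than $d$ copies of $\Lambda$; the factor $\frac1{M}$ on $\Gamma_r$ in the non-$i_A$ coordinates ensures that partial products of fewer than $M_A$ arc-exponentials stay inside $\Omega\subset\Phi^{-1}(\mathbb D)$ via $(iii)$, and only the full $M_A$-fold product along $\Gamma_r$ — attained uniquely at $\beta$ — escapes to $|\Phi|=r>1$. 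The strict inequality in \eqref{eq:k} is precisely what prevents a tie at $\beta$ among the top-order terms, so that the amplified contribution is a single clean exponential monomial rather than a sum that could cancel. Once this combinatorial separation is set up, the analytic estimates are routine perturbation and the standard Godefroy--Shapiro density argument, and one then applies Lemma~\ref{L:2} and Proposition~\ref{P:00} to conclude residuality and the stated algebrability.
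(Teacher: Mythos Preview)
Your plan has the right skeleton but misidentifies the mechanism that separates $\beta$ from the other $\alpha\in A$ with $\alpha_{i_A}=M_A$, and this is a genuine gap. You write that for such $\alpha$ ``the corresponding would-be dominant exponent is pushed strictly into $\Phi^{-1}(\mathbb D)$ by the sign condition \eqref{eq:k}.'' That is not what happens: for those $\alpha$ the top term has exponent $\gamma_j\in\Gamma_r$, exactly the same point as for $\beta$, so $|\Phi|=r>1$ there and no geometric argument can kill it. In the paper the separation is achieved through the \emph{coefficients}, not the exponents: one takes $f_i=L_i+n^{-k_i}$ (a constant, not an exponential) for $i\ne i_A$, and $f_{i_A}=L_{i_A}+R_n$ with $R_n=\sum c_j e^{\gamma_j z/M_A}$ where $c_j^{M_A}\Phi(\gamma_j)^n=b_j\, n^{\sum_{s\ne i_A}k_s\beta_s}$. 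The dominant term in $f^\alpha$ then carries the factor $\prod_{s\ne i_A} n^{-k_s\alpha_s}\cdot c_j^{M_A}\Phi(\gamma_j)^n=b_j\, n^{\sum_{s\ne i_A}k_s(\beta_s-\alpha_s)}$, which stays equal to $b_j$ when $\alpha=\beta$ and tends to $0$ for every other $\alpha$ with $\alpha_{i_A}=M_A$ precisely because of \eqref{eq:k}. Your proposed $f_j$, which carry $\Gamma_r$-type exponentials in \emph{every} coordinate rather than constants in the $i\ne i_A$ coordinates, do not produce this $n$-power weighting, and \eqref{eq:k} then has nothing to act on.

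Two smaller points that would also need fixing. First, the base pieces $u_j\in U_j$ cannot be arbitrary: to control $\Phi(D)^q$ on every cross term you need each $f_j$ to be an exponential polynomial, so you must first approximate within $U_j$ by $L_j=\sum a_{i,j}e^{\lambda_{i,j}z}$ with $\lambda_{i,j}\in\Lambda$ (this is where condition $(i)$ and $(ii)$ of Lemma~\ref{L:7-} earn their keep). Second, the order ``choose $\varepsilon$ small, then $q$ large'' does not match the actual coupling: the coefficients $c_j=c_j(n)$ of the $\Gamma_r$-part must be calibrated against $\Phi(\gamma_j)^n$ and against the constant perturbations $n^{-k_i}$ simultaneously, so there is a single parameter $n\to\infty$ driving everything, not two independent choices.
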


Indeed, suppose Claim~\ref{c:2}  holds. By Lemma~\ref{L:2}, the operator $T=\Phi(D)$ acting on the separable $F$-algebra $X=H(\mathbb{C})$ satisfies that
the set $H_N$ consisting of  those $f=(f_1,\dots ,f_N)$ in  $H(\mathbb{C})^N$ that generate a hypercyclic algebra for $\Phi(D)$ is residual in $H(\mathbb{C})^N$. So part $(a)$ of Theorem~\ref{T:1} follows by Proposition~\ref{P:00}, and  for each $N\ge 2$ we have that
\[
G_N:=\{ g=(g_j)\in H(\mathbb{C})^\N:\ (g_1,\dots ,g_N)\in H_N \} 
\]
is residual in $H(\mathbb{C})^\N$, and so is the countable intersection 
\[
G:=\cap_{N\ge 2} G_N.
\]
Now, given $f=(f_j)\in G$ the algebra it generates may be written as
\[
A(f)=\cup_{N\ge 2} \{ P(f_1,\dots, f_N):\ 0\ne P\in\mathbb{C}[z_1,\dots, z_N]  \mbox{ with } P(0)=0 \}
\]
But given any $N\ge 2$ and $0\ne P\in\mathbb{C}[z_1,\dots, z_N]$ with $P(0)=0$, we know since $f\in G_N$ that $P(f_1,\dots,f_N)$ is hypercyclic for $\Phi(D)$. That is, $A(f)$ is a hypercyclic algebra and part $(b)$ of Theorem~\ref{T:1} follows by Proposition~\ref{P:00} and Lemma~\ref{L:15}.

Now, to show Claim~\ref{c:2} notice that by  Lemma~\ref{L:7-} there exist $r>1$, a non-trivial strictly convex compact arc $\Gamma_r\subset \Phi^{-1}(r\partial\mathbb{D})$, 
and a non-trivial compact segment $\Lambda \subset \Phi^{-1}(\mathbb{D})\setminus\{ 0\}$ so that
\begin{equation}  \label{eq:sufficient}
\begin{aligned}
&(i)\ \  \ \hspace{.3in}  \sum_{i=1}^{d_A} \Lambda \subset \Phi^{-1}(\mathbb{D}), \\
&(ii)\ \  \hspace{.3in}     \mbox{conv}(\Gamma_r \cup \{ 0 \})      +   \sum_{s=1}^i \Lambda  \subset \Phi^{-1}(\mathbb{D}) \ \ \ (1\le i < d_A), \\
&(iii) \ \  \hspace{.3in}  \sum_{s=1}^i   \frac{1}{M_A} \Gamma_r \subset \Phi^{-1}(\mathbb{D})   \ \ \ \ (1\le i < M_A), \mbox{ and } \\
&(iv) \ \ \hspace{.3in}  \mbox{conv}(\Gamma_r\cup\{ 0 \})\setminus \Gamma_r \subseteq \Phi^{-1}(r\mathbb{D}),
\end{aligned}
\end{equation}
where $d_A:=\mbox{max}\{ |\alpha| : \alpha\in A \}$.
Since $\Gamma_r$ and $\Lambda$ have accumulation points in $\C$, there exist $B\in V$ and $L_i\in U_i$ $(i=1,\dots, N)$ of the form
\[
\begin{aligned}
B=B(z)&=\sum_{j=1}^p b_j e^{\gamma_j z} \\
L_i=L_i(z)&=\sum_{j=1}^p a_{i,j} e^{\lambda_{i,j} z}    \ \ \ (i=1,\dots,N)
\end{aligned}
\]
with $p>N$ and scalars $b_j, a_{i,j}\in\mathbb{C}$, $\gamma_j\in\Gamma_r$, and  $\lambda_{i,j}\in\Lambda$  $(1\le i\le N, \ 
1\le j\le p)$.
Now, let $I_N:=\{ 1,\dots , N \} \setminus\{ i_A \}$, and for each $n\in\N$ set
\[
R_n:=\sum_{j=1}^p c_j \ e^{\frac{\gamma_j}{M_A} z},
\]
where $c_j=c_j(n)$ is a solution to
\begin{equation} \label{eq:c_j}
z^{M_A} (\Phi(\gamma_j))^n = b_j \ n^{\sum_{s\in I_N} k_s\beta_s}     \ \ \ (j=1,\dots p).
\end{equation}
Notice that $c_j=c_j(n)\underset{n\to\infty}{\to} 0$  $(j=1,\dots, p)$ and thus
\[
  L_{i_A}+R_n\in U_{i_A}     \  \mbox{ and }\    L_i+\frac{1}{\ n^{k_i}}\in U_i \ (i\in I_N)
\]
whenever $n$ is large enough. So letting
\[
f_i:=\begin{cases} 
\ L_{i_A}+R_n \ \ &\mbox{ if } i=i_A \\
\  \  L_i \, +\frac{1}{\ n^{k_i}}  \ &\mbox{ if } i\in I_N,
\end{cases}
\]
we have $f\in U_1\times\dots\times U_N$ whenever $n$ is large. Now, for each $\alpha=(\alpha_1,\dots ,\alpha_N) \in A$ and $n\in\N$ we have
\[
\begin{aligned}
\Phi(D)^n (f^\alpha) &= \Phi(D)^n (\prod_{i=1}^N f_i^{\alpha_i}) \\
&= \sum_{(u, v, \ell)\in \mathcal{I}_\alpha}  X_\alpha(u, v, \ell, n) \ e^{ (\lambda\cdot u + \frac{1}{M_A} \gamma\cdot v)z},
\end{aligned}
\]
where 
$
\mathcal{I}_\alpha$ consists of those multi-indexes 
\[
(u, v, \ell)=( (u_1,\dots, u_N), v, \ell) \in (  (\N_0^p )^N \times \N_0^p \times (\underset{i\in I_N}{\times} \N_0))\] for which
$
 |u_i|\, +\, \ell_i\ =\alpha_i  \ (i\in I_N)  \mbox{ and } |u_{i_A}|+|v|=\alpha_{i_A},
 $
 and where each $X_\alpha(u,v,\ell, n)$ is given by
\begin{equation} \label{eq:3.12}
X_\alpha(u,v,\ell,n) = {\alpha_{i_A}\choose{ u_{i_A}  v}  } \prod_{i\in I_N} {\alpha_i\choose{ u_i\, \ell_i }}   \ a^u c^v  \ \frac{ (\Phi (\lambda\cdot u + \frac{1}{M_A}\gamma\cdot v))^n}{ n^{\sum_{s\in I_N k_s \ell_s}}}, 
\end{equation}
where
\[
\begin{aligned}
\lambda\cdot u+\frac{1}{M_A} \gamma\cdot v&= \sum_{i=1}^N\sum_{j=1}^p \lambda_{i,j} u_{i,j}  +\frac{1}{M_A}  \sum_{j=1}^p \gamma_j v_j,\\
a^u&=\prod_{ 1\le i \le N, \  1\le j \le p }
 a_{i,j}^{u_{i,j}}, \mbox{ and }\\ 
c^v&= \prod_{j=1}^p c_j^{v_j}.
\end{aligned}
\]

So it suffices to show that for each $\alpha\in A\setminus\{ \beta \}$
\begin{equation}  \label{eq:19}
X_\alpha (u, v, \ell, n)\underset{n\to\infty}{\to} 0 \ \ \ \ \ ((u,v,\ell)\in \mathcal{I}_\alpha)
\end{equation}
and that for each $(u,v,\ell)\in\mathcal{I}_\beta$
\begin{equation}  \label{eq:20}
\lim_{n\to\infty} X_\beta(u,v,\ell, n)=\begin{cases}
b_j     &
\mbox{ if $(u, v)\in \{ (0, M_A e_j): j=1,\dots, p\}$ }\\
0 &\mbox{ otherwise,}
\end{cases}
\end{equation}
where $\{ e_1,\dots ,e_p\}$ is the standard basis of $\mathbb{C}^p$.
Now, let $\alpha\in A$ and let $(u,v,\ell)\in\mathcal{I}_\alpha$ be given. Notice that by $\eqref{eq:c_j}$ and $\eqref{eq:3.12}$ we have
\[
\left| X_\alpha(u,v,\ell, n)\right|  \le (\mbox{Constant}) \ n^{\sum_{s\in I_N} (\frac{|v|}{M_A} k_s\beta_s - k_s\ell_s)} \,
 \left| \frac{ \Phi (\lambda\cdot u+\frac{1}{M_A} \gamma\cdot v) }{ \prod_{j=1}^p    \Phi(\gamma_j)^{\frac{v_j}{M_A}} } \right|^n.
\]
If $1\le |u|< d_A$ we have 
\[
|X_\alpha (u,v,\ell, n)| \le (\mbox{Constant} )\ n^{|v|\sum_{s\in I_N} k_s} \left| \Phi (\lambda\cdot u+\frac{1}{M_A} \gamma\cdot v)\right|^n \underset{n\to\infty}{\to} 0
\]
since  $|\Phi (\gamma_j)|=r>1$ for each $j=1,\dots, p$ and since $
\lambda\cdot u+\frac{1}{M_A} \gamma\cdot v \in \Phi^{-1}(\mathbb{D})$
 by $\eqref{eq:sufficient}$(ii).
 On the other hand, if $|u|=d_A$  (So $|v|=0=\ell_s$ for each $s\in I_N$), then by  $\eqref{eq:sufficient}$(i) we have
\[
|X_\alpha (u,v,\ell, n)| \le (\mbox{Constant} ). |\Phi (\lambda\cdot u)|^n \underset{n\to\infty}{\to} 0.
\]
 Finally, if $|u|=0$  (so $\ell_i=\alpha_i$ for each $i\in I_N$, and $|v|=\alpha_{i_A}$) we have two cases:\vspace{.1in}

\underline{Case 1:}  $\alpha_{i_A}< M_A$.\vspace{.05in}
In this case we again have $\lambda\cdot u+\frac{1}{M_A} \gamma\cdot v\in \Phi^{-1}(\mathbb{D})$ so by $\eqref{eq:sufficient}(iii)$ 
\[
X_\alpha (u,v,\ell, n)\underset{n\to\infty}{\to} 0.
\]

\underline{Case 2:}  $\alpha_{i_A}=M_A$.\vspace{.05in}
Here we have two sub-cases: $|v|_\infty<|v|=M_A$, or else $v=M_A e_j$ for some $1\le j\le p$. If $|v|_\infty<M_A$ then since $\Gamma_r$ is strictly convex by $\eqref{eq:sufficient}$(iv) we have
\[
\lambda\cdot u+\frac{1}{M_A} \gamma\cdot v =\frac{1}{M_A}\gamma\cdot v\in\mbox{conv}(\Gamma_r)\setminus\Gamma_r\subset \Phi^{-1}(r\mathbb{D})
\]
and thus
\[
|X_\alpha(u,v,\ell, n)| \le (\mbox{Constant}) \ n^{\sum_{s\in I_N} k_s (\beta_s-\alpha_s)} \left( \frac{|\Phi (\lambda\cdot u+\frac{1}{M_A} \gamma\cdot v)|}{r}\right)^n \underset{n\to\infty}{\to} 0.
\]
Else, if $v=M_A e_j$ for some $1\le j\le p$, then for $\alpha=\beta$ we have 
\[
X_\beta(u, v,\ell, n)= b_j \ \ \ (n\in\N)
\] by $\eqref{eq:c_j}$ and $\eqref{eq:3.12}$, while for $\alpha\in A\setminus\{\beta\}$ we have
\[
|X_\alpha(u, v,\ell, n)|=|b_j| \ n^{\sum_{s\in I_N} k_s (\beta_s-\alpha_s)}\underset{n\to\infty}{\to} 0
\]
thanks to  $\eqref{eq:c_j}$, $\eqref{eq:3.12}$ and
$\eqref{eq:k}$. So $\eqref{eq:19}$ and $\eqref{eq:20}$ follow.
\end{proof}

\begin{remark} \label{R:condition}
The assumption in Theorem~\ref{T:1} that $|\Phi(0)|<1$, which has only been used to establish Lemma~\ref{L:7-}, may be relaxed. Indeed, with a small modification in its proof we may replace in Lemma~\ref{L:7-} the assumption that $|\Phi(0)|<1$  by the following one: There exist two points $z_1$ and $z_2$ in $\Gamma_1$ and $\epsilon>0$ so that the interior of the triangle $\mbox{conv}\{ 0, -\epsilon z_1, -\epsilon z_2\}$ is contained in $\Phi^{-1}(\mathbb{D})$.   So Theorem~\ref{T:1} also holds under this weaker assumption and it applies for example to $\Phi(z)=\mbox{cos}(z)$, see \cite[Example 10]{BCP2}.
\end{remark}

\section{Proof of theorem~\ref{T:dos}}   \label{S:3}
We first establish the following lemma,  which we find of independent interest and state in a more general setting than that of Theorem~\ref{T:dos}.

\begin{lemma} \label{L:wM}
Let $T$ be a weakly mixing multiplicative operator on a commutative $F$-algebra $X$ over the real or complex scalar field $\K$, and let $N\ge 2$.  The following are equivalent.
\begin{enumerate}
\item[{\rm ($a$)}]\    For each $0\ne P\in \mathbb{K}[t_1,\dots ,t_N]$ with $P(0)=0$, the map $\widehat{P}:X^N\to X$, $f\mapsto P(f)$, has dense range.

\item[{\rm ($b$)}]\     For each $f\in HC(\underset{N}{\underbrace{T\oplus\dots\oplus T}})$ the algebra $A(f)$ is a hypercyclic algebra for $T$.
\end{enumerate}

\end{lemma}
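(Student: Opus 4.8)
\textbf{Proof proposal for Lemma~\ref{L:wM}.}

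The plan is to prove the two implications separately, with $(b)\Rightarrow(a)$ being essentially formal and $(a)\Rightarrow(b)$ requiring the bulk of the work. For $(b)\Rightarrow(a)$, fix $0\ne P\in\mathbb{K}[t_1,\dots,t_N]$ with $P(0)=0$ and an arbitrary non-empty open $V\subset X$. Since $T\oplus\dots\oplus T$ (with $N$ summands) is weakly mixing, hence hypercyclic, the set $HC(T\oplus\dots\oplus T)$ is dense in $X^N$; pick any $f$ in it. By hypothesis $A(f)$ is a hypercyclic algebra, so $P(f)\in A(f)$ is a hypercyclic vector for $T$, and in particular its orbit meets $V$, say $T^qP(f)\in V$. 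Since $T$ is multiplicative, $T^qP(f)=P(T^qf_1,\dots,T^qf_N)=\widehat{P}(T^qf)$, and $T^qf$ ranges over a dense subset of $X^N$ as $f$ and $q$ vary (indeed already for fixed generic $f$); thus $\widehat{P}$ has dense range.

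For the main implication $(a)\Rightarrow(b)$, fix $f=(f_1,\dots,f_N)\in HC(T\oplus\dots\oplus T)$ and a non-zero $P\in\mathbb{K}[t_1,\dots,t_N]$ with $P(0)=0$; I must show $P(f)\in HC(T)$. Let $V$ be a non-empty open subset of $X$; I want $q$ with $T^qP(f)\in V$. By the multiplicativity of $T$, $T^qP(f)=P(T^qf_1,\dots,T^qf_N)=\widehat{P}(T^q\oplus\dots\oplus T^q)(f)$, so it suffices to find $q$ so that the vector $(T^qf_1,\dots,T^qf_N)$ lands in $\widehat{P}^{\,-1}(V)$, an open set (by continuity of $\widehat{P}$) which is moreover \emph{non-empty} precisely because $\widehat{P}$ has dense range by assumption $(a)$. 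Since $f$ is hypercyclic for $T\oplus\dots\oplus T$, its orbit is dense in $X^N$ and hence meets the non-empty open set $\widehat{P}^{\,-1}(V)$; this gives the desired $q$. Since $V$ was arbitrary, $P(f)$ is hypercyclic, and since $P$ was arbitrary, $A(f)$ is a hypercyclic algebra.

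The only genuinely delicate point — and the place where weak mixing rather than mere hypercyclicity is needed — is ensuring that one really can realize all the relevant $N$-fold orbits densely: here it is used that weak mixing of $T$ is equivalent to hypercyclicity of every finite direct sum $T\oplus\dots\oplus T$ (this is the Bès--Peris characterization), so that $HC(T\oplus\dots\oplus T)$ is both non-empty and, being a $G_\delta$ containing a dense orbit, dense in $X^N$. Everything else is a routine combination of continuity of $\widehat P$, multiplicativity of $T$ (to commute $T^q$ past the polynomial), and density of orbits against the non-empty open preimage $\widehat P^{\,-1}(V)$. I do not expect any nontrivial calculation; the argument is a clean topological dynamics transfer once the algebraic identity $T^qP(f)=\widehat P(T^q f)$ is in hand.
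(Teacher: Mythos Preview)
Your proof is correct and follows essentially the same approach as the paper's: both directions hinge on the identity $T^q P(f)=P(T^q f_1,\dots,T^q f_N)$ coming from multiplicativity, with $(a)\Rightarrow(b)$ using that $\widehat{P}^{-1}(V)$ is open and non-empty so the dense orbit of $f$ under $T\oplus\dots\oplus T$ meets it, and $(b)\Rightarrow(a)$ using weak mixing only to guarantee that $HC(T\oplus\dots\oplus T)$ is non-empty. The paper's argument is the same, just presented in the opposite order and slightly more tersely.
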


\begin{proof}
$(a)\Rightarrow (b)$. Let $f=(f_1,\dots, f_N)\in X^N$ be hypercyclic for $T\oplus\dots \oplus T$, and let $0\ne g\in A(f)$. So $g=P(f)$ for some $0\ne P\in \mathbb{K}[t_1,\dots ,t_N]$ with $P(0)=0$. Given any open non-empty subset $V$ of $X$, we know by $(a)$ that $P^{-1}(V)$ is open and non-empty in $X^N$.  Letting $q\in \N$ so that $(T\oplus \dots \oplus T)^q f \in P^{-1}(V)$, by the multiplicativity of $T$ we have
\[
T^q(P(f))=P(T^q(f_1), \dots ,T^q(f_N))\in V.
\]
$(b)\Rightarrow (a)$ Fix a polynomial  $0\ne P\in \mathbb{K}[t_1,\dots ,t_N]$ with $P(0)=0$, and let $V\subset X$ be open and non-empty. Since $T$ is weakly mixing, by $(b)$ there exists $f\in X^N$ so that $P(f)$ is hypercyclic for $T$. So there exists  $q\in\N$ so that
\[
P(T^q(f_1),\dots ,T^q(f_N))=T^q(P(f))\in V.
\]
So $\widehat{P}$ has dense range. 
\end{proof}

\begin{proof}[Proof of Theorem~\ref{T:dos}]\ 

The implication $(d)\Rightarrow (a)$ is immediate, and the equivalence $(a)\Leftrightarrow (b)$ is \cite[Theorem 16]{BCP2}. To see $(a)\Rightarrow (c)$, let $N\ge 2$ be given and fix a non-zero polynomial $P\in \mathbb{K}[t_1,\dots ,t_N]$ with $P(0)=0$ and a non-empty open subset $V$ of $X$. Say,
\[
P(t) =\sum_{\alpha \in A} c_\alpha  t^\alpha  = \sum_{\alpha \in A} c_\alpha t_1^{\alpha_1}\dots t_N^{\alpha_N},
\]
where $A\subset \N_0^N$ is finite, non-empty, and not containing the zero vector
and where $0\ne c_\alpha \in \mathbb{K}$ for each $\alpha\in A$.  Let $m\geq 1$ be the degree of $P$ and write $P=\sum_{j=1}^m P_j$, where each polynomial $P_j$ is $j$-homogeneous. So

\[
P_m(t)=\sum_{\alpha \in A_m} c_\alpha  t^\alpha  = \sum_{\alpha \in A_m} c_\alpha t_1^{\alpha_1}\dots t_N^{\alpha_N}
\]
where  $A_m=\{ \alpha\in A: |\alpha |=m\}$. Letting $\lambda=(\lambda_1,\dots, \lambda_N)\in \mathbb{K}^N$ so that 
$\sum_{\alpha \in A_m}c_{\alpha}\lambda^{\alpha}\ne 0$, we note that
\[
 p(z):=P(\lambda_1z,\dots ,\lambda_Nz)
 \]
 is a polynomial in $z$ of degree $m\ge 1$ and satisfies $p(0)=0$.  By $(a)$, there exists $f\in X$ generating a hypercyclic algebra, so there exists $q\in\N$ with $T^q(p(f))\in V$. Thus by the multiplicativity of $T$ we have 
\[
P(\lambda_1T^q(f),\dots, \lambda_N T^q(f))=p(T^q(f))=T^q(p(f))\in V.
\]
So $\widehat{P}$ has dense range.
Finally, we show $(c)\Rightarrow (d)$.  By $(c)$ and Lemma~\ref{L:wM} we know that for each $N\ge 2$ the set $H_N$ 
of $f\in X^N$ for which $A(f)$ is a hypercyclic algebra for $T$ is residual in $X^N$. Hence for each $N\ge 2$ the set
\[
G_N:=\{ g=(g_j)\in X^\N:\ (g_1,\dots ,g_N)\in H_N \} 
\]
is residual in $X^\N$, and hence so is the countable intersection 
\[
G:=\cap_{N\ge 2} G_N.
\]
Now, given $f=(f_j)\in G$ the algebra it generates may be written as
\[
A(f)=\cup_{N\ge 2} \{ P(f_1,\dots, f_N):\ 0\ne P\in\mathbb{K}[t_1,\dots, t_N]  \mbox{ with } P(0)=0 \}
\]
But given any $N\ge 2$ and $0\ne P\in\mathbb{K}[t_1,\dots, t_N]$ with $P(0)=0$, we know since $f\in G_N$ that $P(f_1,\dots,f_N)$ is hypercyclic for $T$. That is, $A(f)$ is a hypercyclic algebra and $(d)$ follows by Proposition~\ref{P:00}.
\end{proof}

\end{document}